\title[Effective freeness II]{Effective base point free theorem for log canonical pairs II\\---Angehrn--Siu type 
theorems---}
\author{Osamu Fujino} 
\subjclass[2000]{Primary 14C20; Secondary 14E30.}
\date{2009/7/21}
\address{Department of Mathematics, Faculty of Science, 
Kyoto University, 
Kyoto 606-8502 Japan} 
\email{fujino@math.kyoto-u.ac.jp}
\newcommand{\Supp}[0]{{\operatorname{Supp}}}
\newcommand{\mult}[0]{{\operatorname{mult}}}
\newtheorem{thm}{Theorem}[section]
\newtheorem{lem}[thm]{Lemma}
\newtheorem{prop}[thm]{Proposition}
\theoremstyle{definition}
\newtheorem{defn}[thm]{Definition}
\newtheorem{rem}[thm]{Remark}
\newtheorem*{ack}{Acknowledgments}      
\newtheorem*{notation}{Notation}         
\newtheorem{say}[thm]{}
\newtheorem{case}{Case}
\begin{document}
\bibliographystyle{amsalpha+}

\maketitle



\section{Introduction}

The main purpose of this paper is to 
advertise the power of 
the new cohomological technique introduced in \cite{ambro}. 
By this new method, we generalize Angehrn--Siu type 
effective base point freeness and 
point separation (see \cite{as} and \cite[5.8 and 5.9]{kollar2}) 
for {\em{log canonical}} pairs. 
Here, we adopt Koll\'ar's formulation in \cite{kollar2} because 
it is suitable for singular varieties. 
The main ingredients 
of our proof are the inversion 
of adjunction on log canonicity (see 
\cite{kawakita}) and 
the new cohomological technique 
(see \cite{ambro}). 
For the Koll\'ar type effective freeness for 
log canonical pairs, see \cite{f-new}. 
In \cite{fuji-non}, we give a simple new proof of the base point 
free theorem for log canonical pairs. 
It is closely related to the arguments in this paper. 

The following theorems are the main theorems of this paper. 

\begin{thm}[{Effective 
Freeness, cf.~\cite[5.8 Theorem]{kollar2}}]\label{main-as} 
Let $(X, \Delta)$ be a projective {\em{log canonical}} pair and $M$ a 
line bundle on $X$. 
Assume that $M\equiv K_X+\Delta+N$, where $N$ is an ample 
$\mathbb Q$-divisor 
on $X$. 
Let $x\in X$ be a closed point and assume that there are 
positive numbers $c(k)$ with 
the following properties{\em{:}} 
\begin{enumerate}
\item If $x\in Z\subset X$ is an irreducible 
$($positive dimensional$)$ 
subvariety, then 
$$
(N^{\dim Z}\cdot Z)>c(\dim Z)^{\dim Z}. 
$$ 
\item The numbers $c(k)$ satisfy the inequality 
$$
\sum _{k=1}^{\dim X} \frac{k}{c(k)}\leq 1. 
$$
\end{enumerate} 
Then $M$ has a global section not vanishing at $x$. 
\end{thm}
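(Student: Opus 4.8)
The plan is to run the Angehrn--Siu ``cutting down'' induction, adapted to the log canonical setting so that the residual part of $N$ stays ample and the new cohomological technique of \cite{ambro} produces the desired lifting of sections. First I would reduce the problem to creating a minimal non-klt center at $x$. Concretely, I want to find an effective $\mathbb{Q}$-divisor $D\equiv \lambda N$ with $0<\lambda<1$ such that $(X,\Delta+D)$ is log canonical near $x$ and $\{x\}$ is itself a log canonical center, in fact the minimal one through $x$. Granting this, I would set $N'=(1-\lambda)N$, which is still ample, and rewrite $M\equiv K_X+(\Delta+D)+N'$. The new cohomological technique then yields a vanishing of the form $H^1(X,\mathcal{I}\otimes M)=0$, where $\mathcal{I}$ is the relevant ideal sheaf attached by Ambro's machinery to the minimal lc center $\{x\}$, and the resulting surjection $H^0(X,M)\twoheadrightarrow H^0(\{x\},M|_{\{x\}})$ furnishes a section not vanishing at $x$.

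To construct $D$ I would build a strictly decreasing chain $X=W_0\supsetneq W_1\supsetneq\cdots\supsetneq W_r=\{x\}$ of subvarieties through $x$, where each $W_{i+1}$ is a minimal log canonical center cut out on $W_i$. At the step where $\dim W_i=d$, hypothesis (1) gives $(N^{d}\cdot W_i)>c(d)^{d}$, so a Riemann--Roch estimate produces, for $m\gg 0$, a member of $|mN|$ whose multiplicity at $x$ exceeds $m\,c(d)$; dividing by $m$ and scaling by just below $d/c(d)$ gives an effective $\mathbb{Q}$-divisor numerically proportional to $N$ whose multiplicity at $x$ forces a new log canonical center strictly inside $W_i$. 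The total coefficient of $N$ consumed along the chain is then bounded by $\sum_{d} d/c(d)\le 1$ by hypothesis (2), and the \emph{strict} inequalities in (1) leave room to keep this sum $<1$, so the leftover $N'$ is genuinely ample.

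The passage from $W_i$ to $W_{i+1}$ requires inversion of adjunction on log canonicity \cite{kawakita}, which equips $W_{i+1}$ with a log canonical boundary inheriting the ample class $N|_{W_{i+1}}$, so that hypothesis (1) for lower-dimensional centers can be fed back into the same construction and the induction continues. The hard part will be exactly this passage between consecutive centers in a manner compatible with the log canonical (rather than klt) hypothesis: in the classical klt Angehrn--Siu argument one controls an honest multiplier ideal and the final center is an isolated point, whereas here $(X,\Delta)$ may already carry log canonical centers through $x$, so I must track the \emph{minimal} lc center at each stage, descend the pair correctly via \cite{kawakita}, and then invoke the vanishing of \cite{ambro} --- which is precisely designed to handle lc rather than merely klt centers --- to obtain the surjectivity.

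The remaining subtleties I would need to handle carefully are the well-definedness and uniqueness of the minimal lc center through $x$ at each stage, the precise bookkeeping that keeps $N'$ ample after each cut, and the case analysis according to whether the newly created center has the expected smaller dimension or the dimension jumps. The last point is dealt with by the ``tie-breaking'' perturbation familiar from \cite{as}: one slightly perturbs the constructed divisor to make the non-klt locus have a unique minimal center through $x$ of the correct dimension, at a negligible cost in the coefficient of $N$, which does not disturb the inequality $\sum_{d} d/c(d)\le 1$.
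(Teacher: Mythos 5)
Your overall architecture coincides with the paper's: reduce Theorem \ref{main-as} to producing $D\equiv cN$ with $0<c<1$ such that $(X,\Delta+D)$ is log canonical near $x$ and $\{x\}$ is an lc center (this is Proposition \ref{12}), then conclude by the quasi-log vanishing and surjectivity of Theorem \ref{a2}. But two steps, as you describe them, would fail. First, Kawakita's theorem is inversion of adjunction for a \emph{divisor}; it does not ``equip $W_{i+1}$ with a log canonical boundary''---no subadjunction formula on higher-codimension lc centers is available in this setting, and none is needed. The paper never descends the pair to the center: hypothesis (1) already applies to the center $Z$ itself as a subvariety through $x$, so one constructs $F_z\sim mH|_Z$ with $\mult _z F_z>mk$ at \emph{general smooth} points $z\in Z^0$, lifts these divisors to $X$ by surjectivity of the restriction map (Lemma \ref{24}), and only then reaches the possibly very singular point $x$ by degenerating $F_c$ along a curve, with Lemma \ref{inver} (the only place Kawakita's result enters) controlling log canonicity in the limit (Lemma \ref{25}). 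Your shortcut of imposing multiplicity $>mc(d)$ directly at $x$ is precisely what this detour avoids: at a singular point of $W_i$ the parameter count is worse, and, more importantly, multiplicity exceeding the dimension does not imply failure of log canonicity at a singular point, so no new lc center is forced there by your estimate alone.

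Second, the tie-breaking you invoke at the end is both unnecessary and unavailable. The paper is explicit that the advantage of cutting down \emph{minimal lc centers} rather than non-klt loci is exactly that the minimal lc center through $x$ is automatically unique, irreducible, and normal at $x$ (Theorem \ref{a1} (4)), so Proposition \ref{14} directly yields a new minimal lc center of strictly smaller dimension; a dimension drop by more than one is harmless, since the bookkeeping $c=\sum _i c_i k_i/c(k_i)<\sum _k k/c(k)\le 1$ uses only strict decrease of the $k_i$ (and note the slack making $c<1$ comes from the thresholds $c_i<1$, not from the strict inequality in hypothesis (1)). Moreover, in the genuinely log canonical case tie-breaking is problematic: it requires making the pair klt away from the chosen center by a small perturbation of the boundary, which cannot be done at lc places of $\Delta$ itself passing through $x$. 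Finally, a small but necessary correction to your vanishing step: since $(X,\Delta+D)$ is log canonical only near $x$, the ideal fed into Theorem \ref{a2} is not that of $\{x\}$ alone but of $X'=\{x\}\cup X_{-\infty}$, the union with the non-lc locus of $(X,\Delta+D)$; the point $x$ is isolated in $X'$ because $x\notin X_{-\infty}$, and this is what makes the evaluation map $H^0(X,M)\to M\otimes \mathbb C(x)$ surjective.
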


\begin{thm}[{Effective Point 
Separation, cf.~\cite[5.9 Theorem]{kollar2}}]\label{main-as2} 
Let $(X, \Delta)$ be a projective 
{\em{log canonical}} pair and $M$ a 
line bundle on $X$. 
Assume that $M\equiv K_X+\Delta+N$, where $N$ is an ample 
$\mathbb Q$-divisor 
on $X$. 
Let $x_1, x_2\in X$ be closed points and assume that there are 
positive numbers $c(k)$ with 
the following properties{\em{:}} 
\begin{enumerate}
\item If $Z\subset X$ is an 
irreducible $($positive dimensional$)$ 
subvariety which contains $x_1$ or $x_2$, then 
$$
(N^{\dim Z}\cdot Z)>c(\dim Z)^{\dim Z}. 
$$ 
\item The numbers $c(k)$ satisfy the inequality 
$$
\sum _{k=1}^{\dim X}
\sqrt[\leftroot{0}\uproot{0} k]{2}\frac{k}{c(k)}
\leq 1. 
$$
\end{enumerate} 
Then global sections of $M$ separate $x_1$ and $x_2$. 
\end{thm}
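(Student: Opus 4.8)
The plan is to prove Theorem~\ref{main-as2} (Effective Point Separation) by reducing it to the Effective Freeness statement of Theorem~\ref{main-as} applied to the pair $(X,\Delta)$ but with the single point $x$ replaced by the zero-dimensional subscheme $\{x_1,x_2\}$. The underlying philosophy of Angehrn--Siu type arguments is to build, via the hypotheses on intersection numbers, an effective $\mathbb{Q}$-divisor $D\equiv \lambda N$ with $\lambda<1$ whose log canonical centers isolate the relevant points, and then to run the inversion-of-adjunction and Nadel-type vanishing machinery to lift sections. The factor $\sqrt[k]{2}$ appearing in hypothesis~(2) is exactly the overhead needed to handle \emph{two} points simultaneously rather than one: morally, separating $x_1$ and $x_2$ requires producing a section vanishing at (say) $x_2$ but not $x_1$, which is a freeness problem for the ideal sheaf twisted so that the center containing $x_2$ has the correct multiplicity while $x_1$ is cut out.

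First I would set up the log canonical threshold construction. Choosing a rational $\lambda$ slightly less than $1$, I would use hypothesis~(1)---the lower bounds $(N^{\dim Z}\cdot Z)>c(\dim Z)^{\dim Z}$ for every positive-dimensional subvariety $Z$ through $x_1$ or $x_2$---to construct an effective $\mathbb{Q}$-divisor $D$ numerically equivalent to $\lambda N$ such that both $x_1$ and $x_2$ lie in the non-klt locus of $(X,\Delta+D)$, with the two points sitting in (a priori distinct) minimal log canonical centers. The inequality in hypothesis~(2), with its $\sqrt[k]{2}$ weights, is precisely what guarantees that one can pump up the multiplicity at \emph{both} points at once while keeping $\lambda<1$; the $\sqrt[k]{2}$ arises because when one concentrates singularities at two points the effective volume available in dimension $k$ is essentially halved, costing a factor $2^{1/k}$ in the relevant multiplicity estimate. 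I would carry out the standard dimension-reduction: cutting down the minimal lc center step by step, at each stage using the corresponding $c(k)$-bound together with Kawakita's inversion of adjunction to keep the restricted pair log canonical with the desired center.

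Next I would invoke the new cohomological technique of \cite{ambro}. After the construction, the minimal log canonical center $W$ associated to the point to be isolated is zero-dimensional (a reduced point, or the scheme $\{x_1,x_2\}$ when we arrange for both to be centers), and I would apply the vanishing/injectivity theorem for log canonical pairs to the twist $M\equiv K_X+\Delta+D+(N-D)$, where $N-D\equiv(1-\lambda)N$ is ample. This yields surjectivity of the restriction map $H^0(X,M)\to H^0(W,M|_W)$, and since $M|_W$ on the two-point (or isolated-point) scheme is generated by sections separating the points, one concludes that the global sections of $M$ separate $x_1$ and $x_2$.

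The hard part, as in all Angehrn--Siu arguments, will be the uniqueness/minimality control in the dimension-reduction step: ensuring that when I pass to the minimal lc center and restrict, the center does \emph{not} accidentally contain both $x_1$ and $x_2$ when I need it to isolate only one of them, and conversely that the two centers can be treated symmetrically to obtain genuine separation. Keeping track of which point lies on which center through each cutting step---so that the final injectivity theorem separates rather than merely isolates---together with verifying that the $\sqrt[k]{2}$-weighted inequality exactly suffices to carry out the construction at two points simultaneously, is where the technical weight of the proof lies. The log canonical (as opposed to klt) setting forces the use of the Ambro--Fujino cohomological package in place of Nadel vanishing, and guaranteeing that the relevant centers behave well under this machinery is the most delicate point.
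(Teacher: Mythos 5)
Your proposal contains a genuine gap at its core: you plan to arrange that \emph{both} $x_1$ and $x_2$ become (zero-dimensional) lc centers of a single log canonical pair $(X,\Delta+D)$ --- ``the scheme $\{x_1,x_2\}$ when we arrange for both to be centers'' --- and then restrict to that two-point scheme. The numerical hypotheses, even with the $\sqrt[\leftroot{0}\uproot{0} k]{2}$ weights, are not known to permit this, and the paper's construction deliberately does something asymmetric instead. In the key cutting step (Proposition \ref{144} of the paper), when $x_1$ and $x_2$ lie on the same minimal lc center and one cuts it down with a divisor $B\equiv H$ satisfying $(H^k\cdot Z)>2k^k$, the outcome is that $(X,\Delta+D+cB)$ is non-klt at both points but \emph{log canonical at only one of them}; at the other point log canonicity is in general destroyed. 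The end product (Proposition \ref{122}) is, possibly after switching the points, a divisor $D\equiv cN$, $0<c<1$, such that $x_1$ is an isolated lc center of $(X,\Delta+D)$ while $(X,\Delta+D)$ is \emph{not log canonical at $x_2$}. Your flagged ``hard part'' --- tracking which point lies on which center --- is precisely where this asymmetry is forced, and your plan offers no mechanism to avoid it; conversely, if only $x_1$ ends up a center and the pair stays log canonical (or klt) at $x_2$ without $x_2$ belonging to the subscheme you restrict to, then surjectivity of $H^0(X,M)\to H^0(W,M|_W)$ with $W=\{x_1\}$ gives freeness at $x_1$ but no control whatsoever of the section's value at $x_2$, hence no separation.

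The second, related, missing idea is the cohomological step that converts the asymmetric configuration into separation. Your appeal to vanishing ``for log canonical pairs'' with $W$ a union of lc centers does not suffice, because after the construction the pair is genuinely non-lc at $x_2$. The paper uses the stronger feature of Theorem \ref{a2}: for the (non-lc) pair $(X,\Delta+D)$ one may take $X'$ to be the union of the non-lc locus $X_{-\infty}$ and the lc center $\{x_1\}$, and one still gets $H^1(X,\mathcal I_{X'}\otimes M)=0$ from $M-(K_X+\Delta+D)\equiv (1-c)N$ ample. Since $x_2\in X_{-\infty}\subset X'$ and $x_1$ is an isolated point of $X'$, surjectivity of $H^0(X,M)\to H^0(X',M|_{X'})$ yields a section equal to $1$ at $x_1$ and $0$ on the rest of $X'$, in particular vanishing at $x_2$; this (one-sided, after possibly switching the points) section is exactly what separates them. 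Your heuristic for the factor $\sqrt[\leftroot{0}\uproot{0} k]{2}$ is essentially right (it comes from the bound $(H^n)>2n^n$ needed to make the pair non-lc at two points at once, as in Proposition \ref{27}), and your dimension-reduction via Kawakita's inversion of adjunction matches the paper; but without the deliberate sacrifice of log canonicity at $x_2$ and the quasi-log vanishing on $\{x_1\}\cup X_{-\infty}$, the argument as proposed does not close. You would also need the case analysis of the paper's proof of Proposition \ref{122} (minimal centers $W_1\not\subset W_2$, $W_1\subsetneq W_2$, $W_1=W_2$, using Proposition \ref{145} to make the pair non-lc at $x_2$ without touching $x_1$ when the centers differ), which your sketch defers rather than supplies.
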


To strengthen the above theorems, we introduce the 
following definition. 

\begin{defn}
Let $X$ be a normal variety and $B$ an effective $\mathbb Q$-divisor 
on $X$ such that $K_X+B$ is $\mathbb Q$-Cartier. 
Let $x\in X$ be a closed point. If $(X, B)$ is kawamata log 
terminal at $x$, then 
we put $\mu(x, X, B)=\dim X$. 
When $(X, B)$ is log canonical but not kawamata log terminal at $x$, 
we define 
\begin{align*}
&\mu(x, X, B)\\ & =\min \{ \dim W\,  |\,  
W \ {\text{is an lc center of}}\  (X, B) \ {\text{such that}} \ 
x\in W \}.
\end{align*} 
If $(X, B)$ is not log canonical at $x$, then 
we do not define $\mu(x, X, B)$ for such $x$. 
For the details of lc centers, see Theorem \ref{a1} below. 
\end{defn}

Then we obtain the following slight generalizations 
of the above theorems. 

\begin{rem}\label{re1}
In Theorem \ref{main-as}, 
if $\mu =\mu (x, X, \Delta)<\dim X$ 
and $W$ is the minimal lc center of $(X, B)$ with 
$x\in W$, then 
we can weaken the assumptions as follows. 
If $Z\subset W$ is an irreducible 
$($positive dimensional$)$ 
subvariety which contains $x$, then 
$$
(N^{\dim Z}\cdot Z)>c(\dim Z)^{\dim Z} 
$$ 
and the numbers $c(k)$ satisfy the inequality 
$$
\sum _{k=1}^{\mu} \frac{k}{c(k)}\leq 1. 
$$
In particular, if $\mu(x, X, \Delta)=0$, 
then we need no assumptions 
on $c(k)$. 
\end{rem}

\begin{rem}\label{re2}
In Theorem \ref{main-as2}, 
we put $\mu_1=\mu (x_1, X, \Delta)$ and 
$\mu_2=\mu (x_2, X, \Delta)$. 
Possibly after switching $x_1$ and $x_2$, we can 
assume that $\mu_1\leq \mu_2$. 
Let $W_1$ (resp.~$W_2$) be the minimal 
lc center of $(X, \Delta)$ such that 
$x_1\in W_1$ (resp.~$x_2\in W_2$) when 
$\mu_1<\dim X$ (resp.~$\mu_2<\dim X$). 
Otherwise, we put $W_1=X$ (resp.~$W_2=X$).  
\begin{enumerate}
\item[(i)] If $W_1\not \subset W_2$, then 
the assumptions 
in Theorem \ref{main-as2} 
can be replaced as follows. 
If $x_1\in Z\subset W_1$ is an irreducible 
$($positive dimensional$)$ 
subvariety, then 
$$
(N^{\dim Z}\cdot Z)>c(\dim Z)^{\dim Z} 
$$ 
and the numbers $c(k)$ satisfy the inequality 
$$
\sum _{k=1}^{\mu_1} \frac{k}{c(k)}\leq 1. 
$$
\item[(ii)] If $W_1\subsetneq W_2$, then 
the assumptions 
in Theorem \ref{main-as2} 
can be replaced as follows. 
If $x_2\in Z\subset W_2$ is an irreducible 
$($positive dimensional$)$ 
subvariety, then 
$$
(N^{\dim Z}\cdot Z)>c(\dim Z)^{\dim Z} 
$$ 
and the numbers $c(k)$ satisfy the inequality 
$$
\sum _{k=1}^{\mu_2} \frac{k}{c(k)}\leq 1. 
$$
\item[(iii)] If $W_1=W_2$, then 
we can weaken the assumptions in 
Theorem \ref{main-as2} as follows. 
If $Z\subset W_1=W_2$ is an irreducible 
$($positive dimensional$)$ 
subvariety which contains $x_1$ or $x_2$, then 
$$
(N^{\dim Z}\cdot Z)>c(\dim Z)^{\dim Z} 
$$ 
and the numbers $c(k)$ satisfy the inequality 
$$
\sum _{k=1}^{\mu}
\sqrt[\leftroot{0}\uproot{0} k]{2}\frac{k}{c(k)}
\leq 1, 
$$ 
where $\mu=\mu_1=\mu_2$. 
\end{enumerate}
\end{rem}

\begin{say}
Let us quickly review 
the usual technique for base point free theorems via 
multiplier ideal sheaves (cf.~\cite{as} and \cite{kollar2}). 
Let $(X, \Delta)$ be a projective log canonical 
pair and $M$ a line bundle 
on $X$. 
Assume that $M\equiv K_X+\Delta+N$, where 
$N$ is an ample $\mathbb Q$-divisor on $X$. 
Let $x\in X$ be a closed point. 
Assume that $(X, \Delta)$ is kawamata log terminal 
around $x$. 
In this case, it is sufficient to find an effective 
$\mathbb Q$-divisor $E$ on $X$ such 
that $E\equiv cN$ for $0<c<1$, 
$(X, \Delta +E)$ is log canonical around $x$, 
and that $x$ is an isolated non-klt locus of $(X, \Delta +E)$. 
Once we obtain $E$, we have $H^1(X, M\otimes 
\mathcal J(X, \Delta +E))=0$ by the Kawamata--Viehweg--Nadel 
vanishing theorem by $M-(K_X+\Delta+E)\equiv (1-c)N$, 
where $\mathcal J(X, \Delta +E)$ is the multiplier ideal sheaf associated 
to $(X, \Delta +E)$. Therefore, 
the restriction map 
$H^0(X, M)\to H^0(X, M\otimes \mathcal O_X/\mathcal J(X, \Delta 
+E))$ is surjective. 
By the choice of $E$, $x$ is isolated in $\Supp (\mathcal O_X/ 
\mathcal J(X, \Delta+E))$. So, we can obtain a section of $M$ which 
does not vanish at $x$. 
To construct $E$, we need the 
Ohsawa--Takegoshi $L^2$-extension theorem 
or the inversion of adjunction. For the details, see \cite{as} and \cite{kollar2}. 

From now on, we assume that $(X, \Delta)$ is log canonical 
but not kawamata log terminal at $x$. 
When $x$ is an isolated non-klt locus of $(X, \Delta)$, 
we can apply the above arguments. 
However, in general, $x$ is not isolated 
in $\Supp (\mathcal O_X/\mathcal J(X, \Delta))$. 
So, we can not directly use the techniques in \cite{as} and \cite{kollar2}. 
Fortunately, by using 
the new framework introduced in \cite{ambro}, 
we know that it is sufficient 
to find an effective $\mathbb Q$-divisor 
$E$ on $X$ such that $E\equiv cN$ for $0<c<1$, 
$(X, \Delta+E)$ is log canonical 
around $x$, and that $x$ is an lc center of $(X, \Delta+E)$. 
It is because we can prove that the restriction map $H^0(X, M)\to 
\mathbb C(x)\otimes M$ is surjective once we obtain such 
$E$ (see Theorem \ref{a2}). 
We note that the inversion of adjunction on log canonicity 
plays a crucial role when we construct $E$. 
\end{say}

We summarize the contents of this paper. 
In Section \ref{sec2}, 
we will explain the proof of 
Theorems \ref{main-as} and \ref{main-as2}. 
It is essentially the same as Koll\'ar's in \cite[Section 6]{kollar2} 
if we adopt the new cohomological technique and 
the inversion of adjunction on log canonicity. 
So, we will omit some details in Section \ref{sec2}. 
In Section \ref{sec-app}, which is an appendix, 
we collect some basic properties of 
lc centers and the new cohomological technique for 
the reader's convenience since they are not popular yet. 

I hope that this paper 
and \cite{f-new} will motivate the reader to study 
the new cohomolocial technique. 
For a systematic and thorough treatment on 
this topic, that is, the new cohomolocial technique and 
the theory of 
quasi-log varieties, we recommend the reader to 
see \cite{book}. 

\begin{ack}
I was partially supported by the Grant-in-Aid for Young Scientists 
(A) $\sharp$20684001 from JSPS. I was 
also supported by the Inamori Foundation. 
I thank the referee for useful comments. 
\end{ack}

\begin{notation}
We will work over the complex number field $\mathbb C$ throughout  
this paper. 
{\em{Numerical equivalence}} of line bundles and 
$\mathbb Q$-Cartier $\mathbb Q$-divisors 
is denoted by $\equiv$. 
{\em{Linear equivalence}} of Cartier divisors 
is denoted by $\sim$. 
Let $X$ be a normal variety and $B$ an effective 
$\mathbb Q$-divisor such that 
$K_X+B$ is $\mathbb Q$-Cartier. 
Then we can define the discrepancy 
$a(E, X, B)\in \mathbb Q$ for 
every prime divisor $E$ over $X$. 
If $a(E, X, B)\geq -1$ (resp.~$>-1$) for 
every $E$, then $(X, B)$ is called {\em{log canonical}} 
(resp.~{\em{kawamata log terminal}}). 
Note that there always exists the maximal 
Zariski open set $U$ of 
$X$ such that $(X, B)$ is log canonical on $U$. 
If $E$ is a prime divisor over $X$ such that 
$a(E, X, B)=-1$ and the closure of the 
image 
of $E$ on $X$, which is denoted by $c_X(E)$ and called 
the {\em{center}} of $E$ on $X$, is not contained in $X\setminus U$, 
then $c_X(E)$ is called a {\em{center of log canonical 
singularities}} or {\em{log canonical center}} ({\em{lc center}}, for short) 
of $(X, B)$. 
Let $x\in X$ be a closed point. 
Assume that $(X, B)$ is log canonical at $x$ but not 
kawamata log terminal. 
Then there is a unique minimal log canonical 
center $W$ passing through $x$, and $W$ is normal at $x$. 
(see Theorem \ref{a1} in the 
appendix). 
\end{notation}

\section{Proof of the main theorem}\label{sec2}

The main results of this section are 
the following propositions. 

\begin{prop}[{cf.~\cite[6.4 Theorem]{kollar2}}]\label{12}
Let $(X, \Delta)$ be a projective log canonical 
pair and $N$ an ample $\mathbb Q$-divisor 
on $X$. 
Let $x\in X$ be a closed point and $c(k)$ positive numbers such that 
if $x\in Z\subset X$ is an irreducible $($positive dimensional$)$ 
subvariety then 
$$
(N^{\dim Z}\cdot Z)>c(\dim Z)^{\dim Z}. 
$$ 
Assume that 
$$
\sum _{k=1}^{\dim X} \frac{k}{c(k)}\leq 1. 
$$
Then there is an effective $\mathbb Q$-divisor 
$D\equiv c N$ with $0<c<1$ and an open 
neighborhood $x\in X^0\subset X$ such that 
\begin{enumerate}
\item $(X^0, \Delta +D)$ is log canonical, and 
\item $x$ is a center of log canonical singularities for the 
pair $(X, \Delta+D)$. 
\end{enumerate}
\end{prop}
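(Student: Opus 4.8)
The plan is to construct $D$ by an Angehrn--Siu type cutting-down procedure, inductively decreasing the dimension of the minimal lc center through $x$, following the scheme of \cite[Section 6]{kollar2}. The basic input is the standard fact (cf.~\cite{kollar2} and \cite{as}) that on a $d$-dimensional projective variety $V$ with a marked point $x$, an ample $\mathbb Q$-divisor $L$ satisfying $(L^d\cdot V)>a^d$ admits, for rational $\lambda$ slightly larger than $d/a$, an effective $\mathbb Q$-divisor $G\equiv \lambda L$ for which $(V,G)$ fails to be log terminal at $x$. One produces such a $G$ by forcing high vanishing order at $x$ among sections of $|mL|$: comparing $h^0(V,mL)\sim (L^d\cdot V)\,m^d/d!$ with the $\sim (tm)^d/d!$ conditions imposed by order $tm$ at $x$ gives, for $t$ just below $a$, a divisor in $|mL|$ of multiplicity $\geq tm$ at $x$, and $\frac1m$ times it has large enough multiplicity to be non-klt after scaling by $\lambda$.

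First I would apply this with $V=X$, $d=\dim X$, $a=c(\dim X)$ to obtain an effective $\mathbb Q$-divisor $\equiv \lambda_1 N$, $\lambda_1$ slightly larger than $\dim X/c(\dim X)$, which is not klt at $x$. Passing to the log canonical threshold of $(X,\Delta)$ with respect to this divisor at $x$, I get an effective $D_1\equiv \lambda_1 N$ such that $(X,\Delta+D_1)$ is log canonical near $x$ but not klt, hence carries an lc center through $x$. Let $W_1$ be the minimal such center; by Theorem \ref{a1} it is normal at $x$. If $\dim W_1=0$, then $W_1=\{x\}$ and, after checking $\lambda_1<1$, we are done.

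The inductive step is the heart of the argument. Suppose the minimal lc center $W$ through $x$ has dimension $d>0$. Using normality of $W$ at $x$ together with the bound $(N^{\dim W}\cdot W)>c(\dim W)^{\dim W}$, I would apply the elementary lemma on $W$ to build an effective $\mathbb Q$-divisor on $W$; then, via Kawakita's inversion of adjunction on log canonicity \cite{kawakita}, I extend it to an effective $D'\equiv \lambda' N$ on $X$ with $\lambda'$ slightly above $\dim W/c(\dim W)$, arranged so that adjoining $D'$ to the boundary constructed so far produces a strictly smaller minimal lc center through $x$. Iterating, the dimension of the minimal lc center drops at each stage, so after at most $\dim X$ steps it becomes $\{x\}$.

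The main obstacle is the tie-breaking required at each stage: one must pass from ``non-klt at $x$'' to ``$x$ lies on a genuine lc center of the predicted smaller dimension,'' keeping the minimal center unique so that inversion of adjunction can be invoked cleanly on a variety normal at $x$. This is handled by choosing the correct log canonical threshold and perturbing the boundary so that exactly one center of the expected dimension appears, using the normality of $W$ at $x$ to restrict without loss. The remaining bookkeeping is routine: the total coefficient is $c=\sum_i\lambda_i$, and since each $\lambda_i$ is taken just above $d_i/c(d_i)$ with the $d_i$ ranging over distinct dimensions encountered, one has $c<\sum_{k=1}^{\dim X}k/c(k)\leq 1$. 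Thus $D=\sum_i D_i\equiv cN$ with $0<c<1$, and on a suitable open neighborhood $X^0$ of $x$ the pair $(X^0,\Delta+D)$ is log canonical with $x$ as an lc center.
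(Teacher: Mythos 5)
Your overall scheme---inductively cutting down the minimal lc center through $x$, spending roughly $k/c(k)$ of the ample budget at each dimension $k$, with Kawakita's inversion of adjunction as the key input---is the same as the paper's (Propositions \ref{13} and \ref{14} fed into the proof of Proposition \ref{12}). But what you call the main obstacle, tie-breaking, is precisely what the paper's formulation is designed to eliminate: by Theorem \ref{a1} (4) the minimal lc center through $x$ is \emph{automatically} unique, irreducible, and normal at $x$, so no perturbation is needed, and the paper says so explicitly before Proposition \ref{14}. This is not just an aesthetic point: for a pair that is genuinely log canonical (not klt) at $x$, the classical tie-breaking perturbations require room to decrease boundary coefficients that an lc pair need not have, so your proposed fix (``perturbing the boundary so that exactly one center of the expected dimension appears'') is both unjustified in this setting and unnecessary. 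Note also that no control of the \emph{exact} dimension of the new center is available or needed; only the strict drop $\dim Z_1<\dim Z$ matters, since the dimensions $k_1>k_2>\cdots$ encountered are distinct.

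The genuine gap is in your inductive step: you impose high multiplicity ``on $W$'' at $x$ itself, using only that $W$ is normal at $x$. Normal is not smooth: at a singular point of $W$ the number of conditions imposed by $\mult_x\geq tm$ is governed by the local multiplicity of $\mathcal O_{W,x}$ and can exceed $(tm)^{\dim W}/(\dim W)!$, the criterion ``multiplicity larger than the dimension forces non-log-canonicity'' also requires a smooth point, and one must convert multiplicity on $W$ into failure of log canonicity of a pair on $X$ (an lc center carries no adjunction-type pair structure). The paper handles all of this as Koll\'ar does: Lemma \ref{24} produces $F_z$ with $\mult_zF_z>mk$ only at \emph{general} points $z\in Z^0$ of the smooth locus, lifts them to $X$, and Lemma \ref{25} degenerates $z\to x$ along a curve, with the limit $F^X_0$ non-lc at $x$ because otherwise Lemma \ref{inver} (Kawakita) would force nearby members of the family to be lc near $x$. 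That limiting step is where inversion of adjunction is actually used---not the ``extension'' from $W$ to $X$, which is mere surjectivity of $H^0(X,\mathcal O_X(mH))\to H^0(Z,\mathcal O_Z(mH|_Z))$ via vanishing. Your sketch omits the degeneration entirely. Finally, a bookkeeping slip: you take each $\lambda_i$ strictly above $d_i/c(d_i)$ and yet claim $c=\sum_i\lambda_i<\sum_{k}k/c(k)\leq 1$; if every dimension occurs and $\sum_k k/c(k)=1$, your sum exceeds $1$. Do as the paper does: take $D_i\equiv \frac{k_i}{c(k_i)}N$ exactly (the strict hypothesis $(N^{\dim Z}\cdot Z)>c(\dim Z)^{\dim Z}$ already absorbs the epsilon, cf.~Proposition \ref{13}) and let the lc thresholds $0<c_i<1$ supply the strict inequality $c=\sum_i c_i\frac{k_i}{c(k_i)}<1$.
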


\begin{rem}
In Proposition \ref{12}, the assumptions on $Z$ and 
$c(k)$ can be replaced as in Remark \ref{re1}. 
\end{rem}

\begin{prop}[{cf.~\cite[6.5 Theorem]{kollar2}}]\label{122}
Let $(X, \Delta)$ be a projective log canonical 
pair and $N$ an ample $\mathbb Q$-divisor 
on $X$. 
Let $x_1, x_2\in X$ be closed points and $c(k)$ positive numbers such that 
if $Z\subset X$ is an irreducible $($positive dimensional$)$ 
subvariety such that $x_1\in Z$ or $x_2\in Z$ then 
$$
(N^{\dim Z}\cdot Z)>c(\dim Z)^{\dim Z}. 
$$ 
Assume also that 
$$
\sum _{k=1}^{\dim X}
\sqrt[\leftroot{0}\uproot{0} k]{2}\frac{k}{c(k)}
\leq 1. 
$$
Then, possibly after switching 
$x_1$ and $x_2$, one can take an effective $\mathbb Q$-divisor 
$D\equiv c N$ with $0<c<1$ and an open 
neighborhood $x_1\in X^0\subset X$ such that 
\begin{enumerate}
\item $(X^0, \Delta +D)$ is log canonical, 
\item $x_1$ is a center of log canonical singularities for the 
pair $(X, \Delta+D)$, and 
\item $(X, \Delta +D)$ is not log canonical at $x_2$. 
\end{enumerate}
\end{prop}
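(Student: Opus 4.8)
The plan is to prove Proposition \ref{122} by reducing the two-point separation problem to the one-point construction of Proposition \ref{12}, exploiting the extra factor $\sqrt[k]{2}$ in the numerical hypothesis to accommodate both points simultaneously. First I would apply the single-point machinery of Proposition \ref{12} at each of $x_1$ and $x_2$ separately, but rescaled: since $\sum_k \sqrt[k]{2}\,k/c(k)\le 1$ implies $\sum_k k/(c(k)/\sqrt[k]{2})\le 1$, the numbers $c'(k)=c(k)/\sqrt[k]{2}$ satisfy the hypothesis of Proposition \ref{12}, and the intersection bound $(N^{\dim Z}\cdot Z)>c(\dim Z)^{\dim Z}=2\,c'(\dim Z)^{\dim Z}$ gives room to construct an effective $\mathbb Q$-divisor that is singular enough at \emph{both} points. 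Concretely, I would build effective $\mathbb Q$-divisors $D_1\equiv c_1 N$ and $D_2\equiv c_2 N$ making $(X,\Delta+D_i)$ non-log-canonical (or exactly log canonical with the point as an lc center) at $x_i$, with $0<c_i<1$, using the factor-of-$2$ slack so that the coefficients can be arranged to behave well when the two constructions are combined.

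Next I would consider the two minimal lc centers $W_1\ni x_1$ and $W_2\ni x_2$ cut out by the initial singular divisors, and distinguish the cases in Remark \ref{re2} according to whether $W_1\not\subset W_2$, $W_1\subsetneq W_2$, or $W_1=W_2$. The key mechanism, following Koll\'ar's strategy in \cite[Section 6]{kollar2}, is to tune the coefficient of the divisor so that the non-klt locus degenerates: by increasing the multiplier slightly along the minimal lc center through one point, one arranges that this point becomes an lc center while the pair \emph{fails} to be log canonical at the other point, which is precisely conclusions (2) and (3). The inversion of adjunction on log canonicity (see \cite{kawakita} and Theorem \ref{a1}) is what lets me read off the local structure of lc centers after restricting to $W_1$ or $W_2$ and running the construction inductively in lower dimension, where the tangent-cone/multiplicity estimates feeding the bounds $(N^{\dim Z}\cdot Z)>c(\dim Z)^{\dim Z}$ are controlled.

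The role of switching $x_1$ and $x_2$ enters when the minimal centers are nested or equal: possibly after relabeling we may assume $\mu_1\le\mu_2$, and then the construction is driven by the point whose minimal center is smaller, so that enlarging the singularity there does not prematurely destroy log canonicity near the designated lc-center point. In the equal-center case $W_1=W_2$ one genuinely needs the $\sqrt[k]{2}$ factor, because a single divisor on the common center $W$ must be forced to jump to a non-klt singularity at one of two distinct points of $W$ while staying an lc center at the other; here I would invoke a connectedness/tie-breaking argument on $W$ to separate the contributions of the two points, exactly the point where the factor $2$ in the intersection bound is consumed.

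The hard part will be the bookkeeping of coefficients that guarantees \emph{simultaneously} that $0<c<1$ strictly, that log canonicity is achieved in a neighborhood $X^0$ of $x_1$ (conclusion (1)), that $x_1$ remains an lc center (conclusion (2)), and that the pair is genuinely \emph{not} log canonical at $x_2$ (conclusion (3)), rather than merely having $x_2$ as another lc center. Controlling this separation — so that the singularity at $x_2$ strictly exceeds log-canonical threshold while the singularity at $x_1$ sits exactly at it — is the delicate balancing step, and it is where the asymmetry introduced by ordering $\mu_1\le\mu_2$ and by the strict versus non-strict intersection inequalities must be used carefully. Since the argument parallels Koll\'ar's and Proposition \ref{12} supplies the one-point input, I would omit the routine coefficient computations and concentrate the proof on verifying the three case distinctions of Remark \ref{re2} and the strictness of the non-lc conclusion at $x_2$.
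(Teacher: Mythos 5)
Your case division (according to whether $W_1\not\subset W_2$, $W_1\subsetneq W_2$, or $W_1=W_2$, after arranging $\mu_1\le\mu_2$) matches the paper's proof, but the central mechanism you propose does not work, in two concrete places. First, rescaling $c'(k)=c(k)/\sqrt[k]{2}$ and running Proposition \ref{12} once at each of $x_1$ and $x_2$ cannot be ``combined'': each run only gives $D_i\equiv c_iN$ with $c_i$ bounded by $\sum_k \sqrt[k]{2}\,k/c(k)\le 1$, so $c_1+c_2$ may approach $2$, violating the requirement $0<c<1$; worse, adding the divisor constructed at $x_2$ gives no control on discrepancies at $x_1$, so conclusions (1) and (2) are destroyed. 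In the paper the factor $2$ is consumed in a genuinely different way: inside a \emph{single-divisor, two-point} construction. Proposition \ref{27} (the two-point analogue of Proposition \ref{13}, with $(H^n)>2n^n$) produces one divisor $B_{x,x'}$ non-log-canonical at both points, and Proposition \ref{144} (with $(H^k\cdot Z)>2k^k$) cuts down the \emph{common} minimal lc center at both points simultaneously, yielding a pair that is not klt at both points and log canonical at one of them, with the center through the good point dropping in dimension. Taking $H=\sqrt[k]{2}\,\frac{k}{c(k)}N$ gives $(H^k\cdot Z)=2\left(\frac{k}{c(k)}\right)^k(N^k\cdot Z)>2k^k$, which is exactly where the $\sqrt[k]{2}$ in hypothesis (2) enters; one then iterates Proposition \ref{144} until either the pair is non-lc at one of the points or the points land on different minimal lc centers.

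Second, in the cases of non-nested or strictly nested centers, you propose to force non-log-canonicity at $x_2$ by the rescaled one-point machinery, which again costs positive budget and threatens log canonicity at $x_1$. The paper's Proposition \ref{145} achieves condition (3) at \emph{arbitrarily small} cost: since the minimal lc center $Z\ni x_2$ does not contain $x_1$ (by Theorem \ref{a1} (2)), a general member $B=(1/l)A$ with $A\in H^0(X,\mathcal O_X(lH)\otimes I_Z)$ avoids $x_1$, and $(X,\Delta+\varepsilon B)$ is already non-log-canonical at $x_2$ for \emph{every} $\varepsilon>0$ because $B$ contains the lc center $Z$; one then cuts down the center through $x_1$ exactly as in Proposition \ref{12}. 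Without this trick, the coefficient ``bookkeeping'' you defer as routine is in fact the entire content of the proof and cannot be closed with the tools you list. Finally, the connectedness/tie-breaking argument you invoke in the case $W_1=W_2$ is explicitly what the paper avoids: minimal lc centers are irreducible by definition, which is the stated advantage of cutting down minimal lc centers rather than the whole non-klt locus, and the factor $2$ is consumed by the intersection-number bounds in Propositions \ref{27} and \ref{144}, not by any tie-breaking on $W$.
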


\begin{rem}
In Proposition \ref{122}, the assumptions on $Z$ and 
$c(k)$ can be replaced as in Remark \ref{re2}. 
\end{rem}
First, we give a proof of Theorem \ref{main-as} 
by using Proposition \ref{12}. 

\begin{proof}[Proof of {\em{Theorem \ref{main-as}}}]
We consider the pair $(X, \Delta +D)$ constructed 
in Proposition \ref{12}. It is not necessarily log canonical 
but has a natural quasi-log structure (see Theorem \ref{a2}). 
Let $X\setminus X_{-\infty}$ be the maximal 
Zariski open set where $(X, \Delta +D)$ 
is log canonical. 
Since $(X, \Delta +D)$ is log canonical around $x$ and $x$ is 
an lc center of $(X, \Delta +D)$, 
the union of $x$ and $X_{-\infty}$ 
has a natural quasi-log structure 
$X'$ induced by the quasi-log structure 
of $(X, \Delta +D)$ (see Theorem \ref{a2}). 
We consider the following 
short exact sequence 
$$
0\to \mathcal I_{X'}\to \mathcal O_X\to \mathcal O_{X'}\to 0. 
$$ 
Since $M-(K_X+\Delta +D)\equiv (1-c)N$ is ample, 
$H^1(X, \mathcal I_{X'}\otimes M)=0$ by the 
vanishing theorem (see Theorem \ref{a2}). 
Therefore, $H^0(X, M)\to H^0(X', M)$ is surjective. 
We note that $x$ is isolated in $X'$ because $x\not\in 
X_{-\infty}$. 
Thus, the evaluation map $H^0(X, M)\to 
M\otimes \mathbb C(x)$ is surjective. 
This is what we wanted. 
\end{proof}

Next, we give a proof of Theorem \ref{main-as2} by 
Proposition \ref{122}. 

\begin{proof}[Proof of {\em{Theorem \ref{main-as2}}}] 
We use the same notation as in the 
proof of Theorem \ref{main-as}. 
In this case, $x_2$ is on $X_{-\infty}$. 
In particular, $x_2$ is a point of $X'$. 
Since $H^0(X, M)\to H^0(X', M)$ is surjective and 
$x_1$ is an isolated point of $X'$, 
we can take a section of $M$ which separates $x_1$ and $x_2$. 
\end{proof}

Therefore, all we have to prove are Propositions \ref{12} and \ref{122}. 
Let us recall the following easy but important result. 
Here, we need the inversion of adjunction on log canonicity. 
 
\begin{prop}[{cf.~\cite[6.7.1.~Theorem]{kollar2}}]\label{13}
Let $(X, \Delta)$ be a projective log canonical 
pair with $\dim X=n$ and $x\in X$ a closed point. 
Let $H$ be an ample $\mathbb Q$-divisor 
on $X$ such that 
$(H^n)>n^n$. Then 
there is an effective $\mathbb Q$-divisor 
$B_x\equiv H$ such that 
$(X, \Delta +B_x)$ is not log canonical 
at $x$. 
\end{prop}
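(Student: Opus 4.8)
The plan is to construct $B_x$ by forcing a very high-order vanishing at $x$ via a Riemann--Roch count, and then to convert that high multiplicity into a failure of log canonicity. First I would pass to a sufficiently divisible multiple so that $mH$ is an honest very ample line bundle, and invoke asymptotic Riemann--Roch on $X$: since $H$ is ample, $h^0(X,\mathcal O_X(mH))=\frac{(H^n)}{n!}m^n+O(m^{n-1})$, with the leading coefficient governed by the top self-intersection $(H^n)$.

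Next I would bound the number of linear conditions imposed on $H^0(X,\mathcal O_X(mH))$ by requiring a section to lie in $\mathfrak m_x^{k}$, i.e.\ to vanish to order at least $k$ at $x$; this is $\dim_{\mathbb C}\mathcal O_{X,x}/\mathfrak m_x^{k}$, which grows like $\frac{\mult_x X}{n!}k^n$, the Hilbert--Samuel multiplicity being $\mult_x X$. Setting $k=tm$ and comparing the two growth rates, a nonzero section $s_m\in H^0(X,\mathcal O_X(mH))$ with $\mult_x(\operatorname{div} s_m)\ge tm$ exists for $m\gg 0$ as soon as $(H^n)>(\mult_x X)\,t^n$. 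The hypothesis $(H^n)>n^n$ is exactly what is needed so that, in the smooth-point case $\mult_x X=1$, one may choose $t>n$. Then $B_x:=\tfrac1m\operatorname{div}(s_m)$ is effective, satisfies $B_x\sim_{\mathbb Q}H$ (in particular $B_x\equiv H$), and has $\mult_x B_x>n$.

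Finally I would deduce that $(X,\Delta+B_x)$ is not log canonical at $x$. When $x$ is a smooth point this is immediate from the discrepancy of the blow-up $f$ of $x$ with exceptional divisor $E$: one has $a(E,X,\Delta)=n-1-\mult_x\Delta\le n-1$ and $\mult_E B_x=\mult_x B_x$, so $a(E,X,\Delta+B_x)<(n-1)-n=-1$. The genuine difficulty, and the step where I expect the most work, is the case in which $x$ is a singular point of $X$: there $\mult_x X\ge 2$ both raises the number of conditions in the count above (so a given multiplicity is \emph{harder} to achieve) and lowers the discrepancy $a(E,X,\Delta)$ of a suitable extraction (so a given multiplicity is \emph{more effective} at forcing non-log-canonicity). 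This is precisely where the inversion of adjunction on log canonicity (see \cite{kawakita}) enters: I would use it to reduce the non-lc assertion at the singular point $x$ to a multiplicity computation on a general cut-down or on a birational model where the smooth-point criterion applies, and then check that these two competing effects of the singularity balance so that the single clean bound $(H^n)>n^n$ still suffices.
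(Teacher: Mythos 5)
Your smooth-point argument is correct and coincides with the first step of the proof the paper appeals to: the paper proves this proposition in a single line, by citing the proof of \cite[6.7.1~Theorem]{kollar2} and replacing Koll\'ar's semicontinuity statement \cite[7.8~Corollary]{kollar2} with Lemma \ref{inver}, which rests on Kawakita's inversion of adjunction \cite{kawakita}. Up through the construction of $B_x=\frac1m\operatorname{div}(s_m)$ with $\mult_xB_x>n$ at a smooth point and the blow-up discrepancy computation, your write-up is fine.

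The singular case, however, which you yourself flag as the real difficulty, is a genuine gap, and the plan you sketch for it is misdirected. In the actual proof nothing is ever computed at the singular point $x$: one chooses a morphism $g\colon C\to X$ from a smooth pointed curve with $g(0)=x$ and with $g(c)$ a \emph{smooth} point of $X$ for general $c$, applies your Riemann--Roch count at the smooth points $g(c)$ to produce divisors $B_c\equiv H$ in the fixed linear system $|mH|$ with $\mult_{g(c)}B_c>n$, hence with $(X,\Delta+B_c)$ not log canonical at $g(c)$, and then sets $B_x=\lim_{c\to 0}B_c$ (a limit taken inside the projective space $|mH|$; see the proof of \cite[6.7.1~Theorem]{kollar2} for the precise meaning). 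If $(X,\Delta+B_x)$ were log canonical at $x$, Lemma \ref{inver} would give a fixed Euclidean neighborhood $W\ni x$ on which $(X,\Delta+B_c)$ is log canonical for all $c$ near $0$; since $g(c)\in W$ for such $c$, this contradicts the non-log-canonicity at $g(c)$. So inversion of adjunction enters only through this semicontinuity-in-families statement (this is exactly what Kawakita supplies in the log canonical setting, where Koll\'ar's original \cite[7.8~Corollary]{kollar2} was not available in full generality), not through any local reduction at $x$. Your proposed alternative --- balancing the Hilbert--Samuel multiplicity $\mult_xX$ in the conditions count against the drop in discrepancies --- is not backed by any uniform local statement in the lc setting: for instance, if $x$ happens to lie on an lc center of $(X,\Delta)$ (which the hypotheses allow), any effective $\mathbb Q$-Cartier $B$ through $x$ already destroys log canonicity, while at klt singularities no clean inequality ties $\mult_xX$, the vanishing-conditions count, and the discrepancies of suitable extractions together; this is precisely why Koll\'ar degenerates from nearby smooth points instead of computing at $x$. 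Without the limiting argument, and Lemma \ref{inver} to pass non-log-canonicity to the limit, your proof establishes the proposition only when $x$ is a smooth point of $X$.
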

\begin{proof}
If we adopt Lemma \ref{inver} below, then the proof of 
\cite[6.7.1.~Theorem]{kollar2} works without any changes. 
\end{proof}

\begin{lem}[{cf.~\cite[7.8 Corollary]{kollar2}}]\label{inver}
Let $(X, \Delta)$ be a log canonical pair 
and $B_c${\em{:}}~$c\in C$ an algebraic family 
of $\mathbb Q$-divisors on $X$ parametrized by 
a smooth 
pointed curve $0\in C$. Assume that 
$(X, \Delta +B_0)$ is log canonical 
at $x\in X$. Then 
there is a Euclidean open neighborhood 
$x\in W\subset X$ such that 
$(X, \Delta +B_c)$ is log canonical 
on $W$ for $c\in C$ near $0$. 
\end{lem}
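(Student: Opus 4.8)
The plan is to lift everything to the total space of the family and to spend the hypothesis, via the inversion of adjunction on log canonicity \cite{kawakita}, only at the central fibre; the behaviour at the nearby fibres will then be governed by a Bertini-type restriction. First I would pass to $Y=X\times C$, with projections $p\colon Y\to X$ and $q\colon Y\to C$. Here $q$ is smooth, so each fibre $F_c=X\times\{c\}$ is a smooth Cartier divisor canonically isomorphic to $X$. Since $B_c\colon c\in C$ is an algebraic family, I assemble it into a single effective $\mathbb Q$-divisor $\mathcal B$ on $Y$ with $\mathcal B|_{F_c}=B_c$ for every $c$, and set $\Delta_Y=p^*\Delta$. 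Neither $\Delta_Y$ nor $\mathcal B$ has a fibre among its components, so $F_c\not\subset\operatorname{Supp}(\Delta_Y+\mathcal B)$ and all the pairs below are honest pairs. Everything happens near the point $(x,0)\in F_0$.

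The central fibre is the crux. Because $Y=X\times C$ is a product in the fibre direction, the different of the Cartier fibre $F_0$ vanishes, so $(F_0,\operatorname{Diff}_{F_0}(\Delta_Y+\mathcal B))\cong(X,\Delta+B_0)$, which is log canonical at $(x,0)$ by assumption. Applying the inversion of adjunction on log canonicity \cite{kawakita} to the divisor $F_0$ shows that $(Y,\Delta_Y+\mathcal B+F_0)$ is log canonical in a neighbourhood of $(x,0)$; dropping the effective summand $F_0$, also $(Y,\Delta_Y+\mathcal B)$ is log canonical there. Since the non-log-canonical locus is Zariski closed, $(Y,\Delta_Y+\mathcal B)$ is log canonical on a Zariski open set $U\ni(x,0)$. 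This is the only step using Kawakita's theorem, and the only step using the hypothesis; it plays the role of the Ohsawa--Takegoshi extension in Koll\'ar's klt argument.

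Next I restrict to the nearby fibres. Fix a log resolution $\mu\colon\widetilde Y\to Y$ of $(Y,\Delta_Y+\mathcal B)$ over $U$. The $\{F_c\}$ form the base point free family of fibres of $q$, so by generic smoothness of $q\circ\mu$ together with the genericity of a fibre relative to the finitely many divisors of the snc boundary on $\widetilde Y$, there is a finite set $\Sigma\subset C$ such that for $c\notin\Sigma$ the pullback $\mu^*F_c$ has no exceptional component and $\mu$ is simultaneously a log resolution of $(Y,\Delta_Y+\mathcal B+F_c)$. For such $c$ one has $a(E,\,Y,\,\Delta_Y+\mathcal B+F_c)=a(E,\,Y,\,\Delta_Y+\mathcal B)\ge -1$ along every exceptional $E$ over $U$, and the strict transform of $F_c$ enters with coefficient $1$ in an snc configuration, so $(Y,\Delta_Y+\mathcal B+F_c)$ is log canonical near $U$; by adjunction $(X,\Delta+B_c)\cong(F_c,\operatorname{Diff})$ is then log canonical near $x$.

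It remains to cover every $c$ near $0$, not merely the general ones, and this is where I expect the real difficulty. Since $\Sigma$ is finite I may shrink $C$ to a Euclidean disc $D\ni 0$ with $D\cap\Sigma\subseteq\{0\}$; then $(X,\Delta+B_c)$ is log canonical near $x$ for every $c\in D\setminus\{0\}$ by the previous paragraph, while for $c=0$ it holds by hypothesis. Finally, $U$ is Zariski open and contains $(x,0)$, hence contains a product $W\times D$ (after shrinking $D$) with $x\in W\subset X$ a Euclidean neighbourhood; for every $c\in D$ and $x'\in W$ the pair $(X,\Delta+B_c)$ is log canonical at $x'$, which is the assertion. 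The main obstacle is precisely the central value $c=0$: it is the one point of $\Sigma$ that genericity cannot remove, and it is exactly there that Kawakita's inversion of adjunction is indispensable. This is also why the statement lives in the Euclidean topology, since we first isolate $0$ inside the finite set $\Sigma$ and then extract a Euclidean product box from the Zariski open set $U$.
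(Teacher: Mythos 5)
Your proof is correct and is essentially the paper's own argument made explicit: the paper disposes of the lemma with the one-line remark that it is a direct consequence of Kawakita's inversion of adjunction, implicitly via exactly the scheme of Koll\'ar's [7.8 Corollary] that you reconstruct on $X\times C$ (Kawakita applied to the central fibre $F_0$, a Bertini-type argument for the fibres over $c$ outside a finite set $\Sigma$, and a Euclidean shrinking to isolate $0$ in $\Sigma$ and to extract a product box from the Zariski open set). One half-sentence repair: at $c=0$ the hypothesis gives log canonicity of $(X,\Delta+B_0)$ only at the point $x$, so you should also shrink $W$ into the Zariski open locus where $(X,\Delta+B_0)$ is log canonical --- equivalently, take $U$ inside the lc locus of $(Y,\Delta_Y+\mathcal B+F_0)$ furnished by Kawakita and use the easy direction of adjunction --- which is immediate since the non-lc locus is Zariski closed.
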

Lemma \ref{inver} is a direct consequence of 
Kawakita's inversion of adjunction on log canonicity (see 
\cite[Theorem]{kawakita}). 
The next proposition is a reformulation of 
\cite[6.8.1.~Theorem]{kollar2}. In Koll\'ar's proof 
in \cite[Section 6]{kollar2}, 
he cuts down the non-klt locus. On the other hand, 
we cut down the minimal lc center passing through $x$. 
The advantage of our method is in the fact 
that the minimal lc center is always {\em{irreducible}} 
by its definition. 
So, we do not need to use {\em{tie breaking technique}} (see 
\cite[6.9 Step 3]{kollar2}) to make the non-klt locus 
irreducible even when $(X, \Delta)$ is kawamata 
log terminal. 

\begin{prop}[{cf.~\cite[6.8.1.~Theorem]{kollar2}}]\label{14} 
Let $(X, \Delta)$ be a projective log canonical 
pair and $x\in X$ a closed 
point. Let $D$ be an effective $\mathbb Q$-Cartier 
$\mathbb Q$-divisor 
on $X$ such that $(X, \Delta+D)$ is log canonical in 
a neighborhood of $x$. 
Assume that $Z$ is the minimal lc center of $(X, \Delta+D)$ such that 
$x\in Z$ with $k=\dim Z>0$. 
Let $H$ be an ample $\mathbb Q$-divisor 
on $X$ such that $(H^k\cdot Z)>k^k$. 
Then there are an effective $\mathbb Q$-divisor 
$B\equiv H$ and a rational 
number $0<c<1$ such that 
\begin{enumerate}
\item $(X, \Delta+D+cB)$ is log canonical 
in a neighborhood of $x$, and 
\item there is the minimal lc center $Z_1$ of $(X, \Delta +D +cB)$ 
such that $x\in Z_1$ and $\dim Z_1<\dim Z$. 
\end{enumerate}
\end{prop}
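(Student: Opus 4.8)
The plan is to transfer the whole problem to the minimal lc center $Z$, solve it there by Proposition~\ref{13}, and then lift the solution back to $X$, using the inversion of adjunction (Lemma~\ref{inver}) both to find the right coefficient $c$ and to force the dimension of the new center to drop. \emph{Reduction to $Z$.} By Theorem~\ref{a1}, $Z$ is normal at $x$, and because $Z$ is the \emph{minimal} lc center of $(X,\Delta+D)$, subadjunction produces an effective $\mathbb Q$-divisor $\Delta_Z$ on $Z$ with $(K_X+\Delta+D)|_Z\sim _{\mathbb Q}K_Z+\Delta_Z$ such that $(Z,\Delta_Z)$ is kawamata log terminal in a neighborhood of $x$. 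The key numerical hypothesis passes to $Z$ as $((H|_Z)^{\dim Z})=(H^k\cdot Z)>k^k$.

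\emph{Solving on $Z$ and lifting.} I would apply Proposition~\ref{13} to the klt (hence log canonical) pair $(Z,\Delta_Z)$ and the ample divisor $H|_Z$ on the $k$-dimensional variety $Z$; this produces an effective $\mathbb Q$-divisor $B_Z\sim _{\mathbb Q}H|_Z$ on $Z$ with $(Z,\Delta_Z+B_Z)$ not log canonical at $x$. To return to $X$, take $m$ divisible enough that $mB_Z$ is cut out by a section $s\in H^0(Z,mH|_Z)$ and that the restriction $H^0(X,mH)\to H^0(Z,mH|_Z)$ is surjective, using Serre vanishing $H^1(X,\mathcal I_Z\otimes \mathcal O_X(mH))=0$ for $m\gg 0$. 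Lifting $s$ to $\tilde s\in H^0(X,mH)$ and setting $B$ to be $\frac1m$ times the divisor of $\tilde s$ yields an effective $\mathbb Q$-divisor $B\equiv H$ with $\tilde s|_Z=s\neq 0$, so that $Z\not\subset \Supp B$ and $B|_Z=B_Z$.

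\emph{Threshold and dimension drop.} Now run the coefficient $t$ upward in $\Delta+D+tB$. For small $t$ the pair stays log canonical near $x$ by Lemma~\ref{inver}. By Kawakita's inversion of adjunction, log canonicity of $(X,\Delta+D+tB)$ near $x$ is equivalent to that of $(Z,\Delta_Z+tB_Z)$ near $x$; hence the log canonical threshold $c$ of $B$ over $(X,\Delta+D)$ at $x$ coincides with that of $B_Z$ over $(Z,\Delta_Z)$. Since $(Z,\Delta_Z)$ is klt while $(Z,\Delta_Z+B_Z)$ is not log canonical, we get $0<c<1$ and $(X,\Delta+D+cB)$ log canonical near $x$, which is (1). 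At $t=c$ a new lc center of $(Z,\Delta_Z+cB_Z)$ passes through $x$; as $(Z,\Delta_Z)$ was klt this center is a \emph{proper} subvariety of $Z$, and by inversion of adjunction it is an lc center of $(X,\Delta+D+cB)$ through $x$. Therefore the minimal lc center $Z_1$ of $(X,\Delta+D+cB)$ through $x$ satisfies $\dim Z_1<\dim Z$, giving (2).

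\emph{Main obstacle.} The delicate point is the precise use of inversion of adjunction: one must know that log canonicity upstairs is genuinely governed by the adjunction pair $(Z,\Delta_Z)$, so that both the threshold $c$ and the newly created lc center are read off on $Z$ and the center drops dimension. This, together with the subadjunction furnishing the klt structure on the minimal lc center, is what makes the dimension drop rigorous; a secondary technical point is reconciling the numerical class $\equiv H$ with an actual $\mathbb Q$-linear class that restricts correctly to $Z$ in the lifting step.
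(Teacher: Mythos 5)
Your reduction to the minimal lc center via subadjunction contains a genuine gap at the crucial ``threshold and dimension drop'' step. You invoke ``Kawakita's inversion of adjunction'' to claim that log canonicity of $(X,\Delta+D+tB)$ near $x$ is \emph{equivalent} to log canonicity of $(Z,\Delta_Z+tB|_Z)$ near $x$, so that the lc threshold and the new lc center can be read off on $Z$. But Kawakita's theorem (see \cite{kawakita}) is a statement about \emph{divisorial} adjunction: it compares $(X,S+B)$ with the pair given by the different on the normalization of a reduced divisor $S$. It says nothing about lc centers of codimension $\geq 2$, and no such equivalence is available for a minimal lc center $Z$ of higher codimension --- this is precisely the content of Kawamata-type adjunction conjectures, not a known lemma. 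The situation is worse than a missing reference: the pair $(Z,\Delta_Z)$ produced by subadjunction is not canonical, and Kawamata's subadjunction theorem only gives $(K_X+\Delta+D)|_Z+\varepsilon H|_Z\sim_{\mathbb Q}K_Z+\Delta_Z$ after an arbitrarily small ample twist, so even one direction of your comparison, and hence the asserted equality of thresholds, is not well-posed. Consequently neither conclusion (1) (log canonicity of $(X,\Delta+D+cB)$ near $x$ up to the threshold computed on $Z$) nor conclusion (2) (that the proper new center you see on $Z$ is an lc center of $(X,\Delta+D+cB)$) is justified by your argument.

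The paper avoids this problem by never descending to a pair structure on $Z$. Following Koll\'ar, it fixes a resolution $f:Y\to X$ with a divisor $E$ such that $a(E,X,\Delta+D)=-1$ and $f(E)=Z$, and an open set $Z^0\subset Z$ over which $f|_E$ is a smooth fibration whose fibers avoid the other exceptional divisors. For $z\in Z^0$, a divisor $F_z\sim mH|_Z$ with $\mult_zF_z>mk$ lifts to $F^X_z\sim mH$ with $(X,\Delta+D+(1/m)F^X_z)$ not log canonical at $z$: the multiplicity estimate is transferred upstairs by pulling back along $f$ and computing along the fiber $(f|_E)^{-1}(z)$, with no adjunction on $Z$ (Lemma \ref{24}). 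Since $x$ need not lie in $Z^0$, one then degenerates the divisors $F_{g(c)}$ along a curve through $x$; non--log canonicity at $x$ survives the limit by semicontinuity, while Lemma \ref{inver} --- which is the \emph{actual} consequence of Kawakita's theorem used here, an openness statement for log canonicity in families --- guarantees that the limit divisor $F^X_0$ keeps the pair log canonical on $U\setminus Z$ and at the generic point of $Z$ (Lemma \ref{25}). Taking $c$ maximal with $(X,\Delta+D+cB)$ log canonical at $x$, where $B=(1/m)F^X_0$, then forces the new minimal lc center through $x$ to be a proper subvariety of $Z$. If you want to salvage a subadjunction-based proof, you would first have to establish the two-way comparison of singularities along a higher-codimension lc center, which is an open problem rather than an ingredient you may quote.
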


\begin{proof}
By the assumption, there are a projective birational 
morphism $f:Y\to X$ and a divisor $E\subset Y$ such that 
$a(E, X, \Delta+D)=-1$ and $f(E)=Z$. 
We write $K_Y=f^*(K_X+\Delta+D)+\sum e_i E_i$ where 
$E=E_1$ and so $e_1=-1$. 
Let $Z^0\subset Z$ be an open subset such that 
\begin{enumerate}
\item $Z^0$ is smooth and $f|_E:E\to Z$ is smooth over $Z^0$, and 
\item if $z\in Z^0$, then 
$(f|_E)^{-1}(z)\not\subset E_i$ for $i\ne 1$. 
\end{enumerate}

\begin{lem}[{cf.~\cite[6.8.3 Claim]{kollar2}}]\label{24}
Notation as above. 
Choose $m\gg 1$ such that $mH$ is Cartier. 
Let $U$ be the Zariski open subset of $X$ where $(X, \Delta+D)$ is log canonical. 
Then, for every $z\in Z^0$, the following 
assertions hold{\em{:}} 
\begin{enumerate}
\item There is a divisor $F_z\sim mH|_{Z}$ such that 
$\mult _z F_z>mk$. 
\item $\mathcal O_X(mH)\otimes I_Z$ is generated by global sections and 
$$H^1(X, \mathcal O_X(mH)\otimes I_Z)=0.$$  
In particular, $H^0(X, \mathcal O_X(mH))\to H^0(Z, \mathcal O_Z(mH|_Z))$ 
is surjective. 
\item For any $F\sim mH|_Z$, there is $F^X\sim mH$ such that 
$F^X|_Z=F$ and $(X, \Delta +D+(1/m)F^X)$ is log canonical 
on $U\setminus Z$. 
\item Let $F^X_z\sim mH$ be such that ${F^X_z}|_Z=F_z$. 
Then $(X, \Delta +D+(1/m)F^X_z)$ is not log canonical at $z$.  
\end{enumerate}
\end{lem}

The proof of Lemma \ref{24} is the same as the 
proof of \cite[6.8.3 Claim]{kollar2}. 
So, we omit it here. Pick $z_0\in Z$ arbitrary. 
Let $0\in C$ be a smooth affine curve and $g:C\to Z$ 
a morphism 
such that $z_0=g(0)$ and $g(c)\in Z^0$ for 
general $c\in C$. 
For general $c\in C$, pick $F_c:=F_{g(c)}$ as in Lemma \ref{24} (1). 
Let $F_0=\lim _{c\to 0}F_c$. Then 
we obtain the following lemma. 
For the precise meaning of $\lim_{c\to 0}F_c$, 
see the proof of \cite[6.7.1.~Theorem]{kollar2}. 

\begin{lem}[{cf.~\cite[6.8.4 Claim]{kollar2}}]\label{25} 
Notation as above. 
There exists a divisor 
$F^X_0\in |mH|$ such that 
\begin{enumerate}
\item ${F^X_0}|_Z=F_0$, 
\item $(X, \Delta+D+(1/m)F^X_0)$ is log canonical 
on $U\setminus Z$, 
\item $(X, \Delta+D+(1/m)F^X_0)$ is log canonical at 
the generic point of $Z$, and 
\item $(X, \Delta +D+(1/m)F^X_0)$ is not log canonical 
at $z_0$. 
\end{enumerate}
\end{lem}
The proof of Lemma \ref{25} is the same 
as the proof of \cite[6.8.4 Claim]{kollar2} 
if we adopt Lemma \ref{inver}. 
To finish the proof of Proposition \ref{14}, we 
set $B=(1/m)F^X_0$. 
Let $c$ be the maximal value such that 
$(X, \Delta +D+cB)$ is log canonical 
at $x$. Then we have the desired properties. 
\end{proof}

\begin{proof}[Proof of {\em{Proposition \ref{12}}}] 
Without loss of generality, we can assume that 
$c(k)\in \mathbb Q$ for every $k$. 
If $(X, \Delta)$ is kawamata log terminal around $x$, 
then we put $Z_1=X$. 
Otherwise, 
let $Z_1$ be the minimal lc center of $(X, \Delta)$ such that 
$x\in Z_1$. If $\dim Z_1=k_1>0$, then 
we can find $x\in D_1\equiv \frac{k_1}{c(k_1)}N$ and 
$0<c_1<1$ such that $(X, \Delta+c_1D_1)$ 
is log canonical around $x$ and that $k_2=\dim Z_2<k_1$, 
where $Z_2$ is the minimal lc center of $(X, \Delta+c_1D_1)$ with 
$x\in Z_2$ (see Proposition \ref{14}). Repeat this process. 
Then we can find 
$n\geq \mu_1=k_1>k_2>\cdots >k_l>0$, where $k_i\in \mathbb Z$, with the 
following properties. 
\begin{enumerate}
\item there exists an effective $\mathbb Q$-divisor $D_i$ such that 
$D_i\equiv\frac{k_i}{c(k_i)}N$ for every $i$, 
\item there exists a rational number $c_i$ with $0<c_i<1$ for every $i$, 
\item $(X, \Delta +\sum _{i=1}^{l}c_iD_i)$ is log canonical 
around $x$, and  
\item $x$ is an lc center of the pair $(X, \Delta +\sum _{i=1}^{l}c_iD_i)$. 
\end{enumerate}  
We put $D=\sum _{i=1}^{l}c_i D_i$. 
Then $D$ has the desired properties. 
We note that 
$0<c=\sum _{i=1}^{l}c_i\frac{k_i}{c(k_i)}<1$ and $D\equiv cN$. 
\end{proof}

From now on, we consider the proof of Proposition \ref{122}. 
We can prove Proposition \ref{27} 
by the same way as Proposition \ref{13}. 

\begin{prop}\label{27} 
Let $(X, \Delta)$ be a projective log canonical pair with $\dim X=n$ and 
$x, x'\in X$ closed points. 
Let $H$ be an ample $\mathbb Q$-divisor on $X$ such that 
$(H^n)>2n^n$. Then 
there is an effective 
$\mathbb Q$-divisor $B_{x, x'}\equiv H$ 
such that 
$(X, \Delta +B_{x, x'})$ is not log canonical 
at $x$ and $x'$.  
\end{prop}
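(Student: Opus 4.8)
The plan is to mimic the proof of Proposition \ref{13}, i.e.~the argument of \cite[6.7.1.~Theorem]{kollar2}, the only new feature being the factor $2$ that appears because we must create a non-log-canonical singularity at the two points $x$ and $x'$ simultaneously. We may assume $x\neq x'$, since otherwise the statement is weaker than Proposition \ref{13}. First I would fix $m\gg 1$ with $mH$ Cartier and compare dimensions. By asymptotic Riemann--Roch for the ample divisor $H$ we have $h^0(X,\mathcal O_X(mH))=\frac{(H^n)}{n!}m^n+O(m^{n-1})$, whereas imposing $\mult_p>mn$ at a single smooth point $p$ costs at most $\binom{mn+n}{n}=\frac{n^n}{n!}m^n+O(m^{n-1})$ linear conditions. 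Hence forcing multiplicity $>mn$ at two points costs $\frac{2n^n}{n!}m^n+O(m^{n-1})$ conditions, and since $(H^n)>2n^n$ the leading coefficient $\frac{(H^n)}{n!}$ strictly dominates $\frac{2n^n}{n!}$; for $m$ large the gap in the leading terms absorbs the $O(m^{n-1})$ error, so the required sections exist.

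Concretely, write $X_{\mathrm{sm}}$ for the smooth locus and note that we may assume a general point of $X_{\mathrm{sm}}$ avoids $\Supp\Delta$. For a general pair $(z,z')\in X_{\mathrm{sm}}\times X_{\mathrm{sm}}$ the dimension count produces $F_{z,z'}\in |mH|$ with $\mult_z F_{z,z'}>mn$ and $\mult_{z'}F_{z,z'}>mn$. At the smooth points $z,z'$ this yields $\mult_z\frac1m F_{z,z'}>n=\dim X$ and $\mult_{z'}\frac1m F_{z,z'}>n$, so $(X,\Delta+\frac1m F_{z,z'})$ is not log canonical at $z$ nor at $z'$: multiplicity exceeding $n$ at a smooth point already violates log canonicity, and adding the effective divisor $\Delta$ only worsens the discrepancies.

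To reach the prescribed, possibly singular, points $x,x'$ I would degenerate exactly as in Lemma \ref{25}. Choose a smooth pointed affine curve $0\in C$ with morphisms $g,g':C\to X$ satisfying $g(0)=x$, $g'(0)=x'$ and $g(c),g'(c)\in X_{\mathrm{sm}}$ general for general $c$, set $F_c=F_{g(c),g'(c)}$, and let $F_0=\lim_{c\to 0}F_c\in |mH|$ (for the precise meaning of the limit, see the proof of \cite[6.7.1.~Theorem]{kollar2}). Then $B_{x,x'}:=\frac1m F_0\equiv H$ is the desired divisor, and its non-log-canonicity at $x$ and $x'$ is extracted from the family via Lemma \ref{inver}. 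Indeed, were $(X,\Delta+B_{x,x'})$ log canonical at $x$, then Lemma \ref{inver} applied to the family $\frac1m F_c$ would make $(X,\Delta+\frac1m F_c)$ log canonical on a Euclidean neighborhood $W$ of $x$; since $g(c)\in W$ for $c$ near $0$, this contradicts the previous paragraph. The identical argument at $x'$ handles the second point.

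The step I expect to be the main obstacle is this last one: justifying that the limiting member $F_0$ genuinely lies in $|mH|$ and that the non-log-canonical behaviour survives the degeneration at both points at once. This is precisely where Lemma \ref{inver} (Kawakita's inversion of adjunction on log canonicity) is indispensable, since it upgrades the easy pointwise multiplicity estimate at general smooth points to the required statement at the fixed points $x,x'$; because the assertion is local and $x\neq x'$, the two applications do not interfere. Everything else is Koll\'ar's computation with the single change $n^n\rightsquigarrow 2n^n$, and goes through without modification.
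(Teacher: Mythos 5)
Your proposal is correct and follows essentially the same route as the paper, which proves Proposition \ref{27} ``by the same way as Proposition \ref{13}'', i.e.~by running Koll\'ar's proof of \cite[6.7.1~Theorem]{kollar2} with the single change $n^n\rightsquigarrow 2n^n$ (to impose multiplicity $>mn$ at two general smooth points) and with \cite[7.8~Corollary]{kollar2} replaced by Lemma \ref{inver} in the degeneration step. Your dimension count, the use of $\mult_z>n$ at a smooth point to violate log canonicity, and the two local applications of Kawakita's inversion of adjunction at $x$ and $x'$ are exactly the intended argument.
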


By Proposition \ref{27}, we can modify Proposition \ref{14} as follows. 
We leave the details as an exercise for the 
reader. 

\begin{prop}\label{144} 
Let $(X, \Delta)$ be a projective log canonical 
pair and $x, x'\in X$ closed 
points. Let $D$ be an effective $\mathbb Q$-Cartier $\mathbb Q$-divisor 
on $X$ such that $(X, \Delta+D)$ is log canonical in 
a neighborhood of $x$ and $x'$. 
Assume that there exists a 
minimal lc center $Z$ of $(X, \Delta+D)$ such that 
$x, x'\in Z$ with $k=\dim Z>0$. 
Let $H$ be an ample $\mathbb Q$-divisor 
on $X$ such that $(H^k\cdot Z)>2k^k$. 
Then there are an effective $\mathbb Q$-divisor 
$B\equiv H$ and a rational 
number $0<c<1$ such that 
\begin{enumerate}
\item $(X, \Delta+D+cB)$ is not kawamata log terminal 
at the points $x$ and $x'$, and 
is log canonical at one of them, say at $x$, and 
\item there is the minimal lc center $Z_1$ of $(X, \Delta +D +cB)$ 
such that $x\in Z_1$ and $\dim Z_1<\dim Z$. 
\end{enumerate}
\end{prop}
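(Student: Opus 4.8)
The plan is to run the proof of Proposition~\ref{14} with the two points $x$ and $x'$ carried along simultaneously, replacing the one-point multiplicity estimate by a two-point one; the factor $2$ in the hypothesis $(H^k\cdot Z)>2k^k$ is precisely what pays for imposing a singularity at both points at once, exactly as the factor $2$ in Proposition~\ref{27} does in the top-dimensional case. First I would reproduce the opening of the proof of Proposition~\ref{14}: choose a projective birational morphism $f\colon Y\to X$ and a divisor $E=E_1$ with $a(E,X,\Delta+D)=-1$ and $f(E)=Z$, write $K_Y=f^*(K_X+\Delta+D)+\sum e_iE_i$ with $e_1=-1$, and fix the open set $Z^0\subset Z$ over which $f|_E$ is smooth and the fibres avoid the $E_i$ with $i\neq 1$. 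The points $x,x'$ lie in $Z$ but need not lie in $Z^0$, so $Z^0$ and the later limiting step play the same role as before.

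Next I would establish the two-point analogue of Lemma~\ref{24}, whose only genuinely new ingredient is part~(1): for $z,z'\in Z^0$ I want $F_{z,z'}\sim mH|_Z$ with $\mult_zF_{z,z'}>mk$ and $\mult_{z'}F_{z,z'}>mk$. This is the restriction to the $k$-fold $Z$ of the counting behind Proposition~\ref{27}: imposing multiplicity $>mk$ at two points of a $k$-dimensional variety costs asymptotically $2(mk)^k/k!$ linear conditions, while $h^0(Z,\mathcal O_Z(mH|_Z))$ grows like $(m^k/k!)(H^k\cdot Z)$, so $(H^k\cdot Z)>2k^k$ makes sections outnumber conditions for $m\gg 1$. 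Parts~(2)--(4) of Lemma~\ref{24} --- the global generation and $H^1$-vanishing that give surjectivity of $H^0(X,\mathcal O_X(mH))\to H^0(Z,\mathcal O_Z(mH|_Z))$, the extension of a prescribed $F\sim mH|_Z$ to $F^X\sim mH$ keeping $(X,\Delta+D+(1/m)F^X)$ log canonical on $U\setminus Z$, and the failure of log canonicity at a point of large multiplicity --- go through unchanged with the single point replaced by the pair $\{z,z'\}$.

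Then I would carry out the limiting construction of Lemma~\ref{25} along a curve that degenerates a general pair to $(x,x')$: take a smooth affine curve $0\in C$ and a morphism $c\mapsto(g(c),g'(c))$ into $Z\times Z$ with $(g(0),g'(0))=(x,x')$ and $(g(c),g'(c))\in Z^0\times Z^0$ general, set $F_c=F_{g(c),g'(c)}$ and $F_0=\lim_{c\to 0}F_c$. Inversion of adjunction in the form of Lemma~\ref{inver} then produces $F^X_0\in|mH|$ with $F^X_0|_Z=F_0$ that is log canonical on $U\setminus Z$ and at the generic point of $Z$ and is not log canonical at either $x$ or $x'$; the argument is identical to that for Lemma~\ref{25}. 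Setting $B=(1/m)F^X_0\equiv H$, let $c_x,c_{x'}$ be the log canonical thresholds at $x$ and at $x'$ of $B$ with respect to $(X,\Delta+D)$. Both lie in $(0,1)$: they are positive since $(X,\Delta+D)$ is log canonical near $x,x'$, and strictly less than $1$ since $(X,\Delta+D+B)$ is not log canonical at either point. I would put $c=\max(c_x,c_{x'})$ and relabel so that the maximum is attained at $x$. At this coefficient $(X,\Delta+D+cB)$ is log canonical but not kawamata log terminal at $x$, and since $c\ge c_{x'}$ it is also not kawamata log terminal at $x'$ (not log canonical if $c>c_{x'}$, at its threshold if $c=c_{x'}$), which is~(1); log canonicity at the generic point of $Z$ and on $U\setminus Z$ forces the new minimal lc center $Z_1$ through $x$ to be a proper subvariety of $Z$, which is~(2).

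The step I expect to be the real obstacle is the two-point multiplicity estimate together with its persistence under the limit: one must verify that the factor $2$ genuinely suffices to force multiplicity $>mk$ at $x$ and at $x'$ simultaneously, and that $F_0=\lim_{c\to0}F_c$ still violates log canonicity at each point while not containing $Z$, so that log canonicity at the generic point of $Z$ survives and $Z_1$ drops in dimension. Once this is in hand, the threshold bookkeeping in the final step, and in particular the choice $c=\max(c_x,c_{x'})$ that makes the pair fail to be kawamata log terminal at both points while keeping it log canonical at one of them, is routine.
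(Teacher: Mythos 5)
Your proposal is correct and is exactly the argument the paper intends: the paper gives no written proof of Proposition~\ref{144}, saying only that one modifies Proposition~\ref{14} via the two-point estimate of Proposition~\ref{27} and leaving the details as an exercise, and your write-up (doubling the multiplicity conditions in Lemma~\ref{24}(1) to use $(H^k\cdot Z)>2k^k$, degenerating a pair of general points of $Z^0$ to $(x,x')$ as in Lemma~\ref{25} with Lemma~\ref{inver}, then taking $c=\max(c_x,c_{x'})$) fills in those details correctly.
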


The next proposition is very easy. 

\begin{prop}\label{145} 
Let $(X, \Delta)$ be a projective log canonical 
pair and $x, x'\in X$ closed 
points. Let $D$ be an effective $\mathbb Q$-Cartier $\mathbb Q$-divisor 
on $X$ such that $(X, \Delta+D)$ is log canonical in 
a neighborhood of $x$ and $x'$. 
Assume that the minimal lc center $Z\ni x'$ of $(X, \Delta+D)$ does not 
contain $x$, that is, $x\not \in Z$. 
Let $H$ be an ample $\mathbb Q$-divisor 
on $X$. 
Then there is an effective $\mathbb Q$-divisor 
$B\equiv H$ 
such that $x\not \in B$ and $(X, \Delta+\varepsilon B)$ is not 
log canonical at $x'$ for every $\varepsilon >0$. 
\end{prop}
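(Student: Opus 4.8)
The plan is to exploit the defining feature of the log canonical center $Z$: there is a divisor over $X$ of discrepancy exactly $-1$ lying above $Z$, and perturbing the boundary by \emph{any} positive multiple of a divisor whose support contains $Z$ pushes this discrepancy strictly below $-1$. So it suffices to produce an effective $B\equiv H$ with $Z\subset\Supp B$ but $x\notin\Supp B$. Concretely, I would first fix a projective birational morphism $f\colon Y\to X$ and a prime divisor $E\subset Y$ with $a(E,X,\Delta+D)=-1$ and $c_X(E)=Z$; such $E$ exists precisely because $Z$ is the minimal lc center of $(X,\Delta+D)$ through $x'$.

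To construct $B$, I would fix $m\gg 1$ so that $mH$ is very ample and Cartier. Since $Z$ is a proper closed subvariety avoiding the point $x$, Serre vanishing makes $\mathcal O_X(mH)\otimes I_Z$ globally generated for $m\gg 1$; in particular it has a global section not vanishing at $x$, the hypothesis $x\notin Z$ being exactly what gives $(I_Z)_x=\mathcal O_{X,x}$. Letting $F\in|mH|$ be the zero divisor of such a section and setting $B=\tfrac1m F$, one gets $B\sim_{\mathbb Q}H$, hence $B\equiv H$, with $B$ effective, $Z\subset\Supp B$, and $x\notin\Supp B$, as required.

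It then remains only to read off the discrepancy. Because $f(E)=Z\subset\Supp B$, one has $E\subset\Supp(f^*B)$, so $t:=\operatorname{ord}_E(f^*B)>0$; therefore $a(E,X,\Delta+D+\varepsilon B)=a(E,X,\Delta+D)-\varepsilon t=-1-\varepsilon t<-1$ for every $\varepsilon>0$, and since $x'\in Z=c_X(E)$ this shows that log canonicity fails at $x'$ for every $\varepsilon>0$. The condition $x\notin\Supp B$ guarantees that $B$ leaves the point $x$ untouched, so nothing is disturbed there.

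The only step demanding any genuine attention — hence the ``main obstacle,'' though a mild one — is the simultaneous control of $\Supp B$: it must contain all of $Z$ (to meet $E$) yet miss $x$. This is exactly the global generation of $\mathcal O_X(mH)\otimes I_Z$ at $x$, which is where $x\notin Z$ and the ampleness of $H$ are used. Everything else is a formal consequence of $Z$ being an lc center, which is why the statement is elementary.
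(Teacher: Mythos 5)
Your proof is correct and takes essentially the same route as the paper: the paper sets $B=(1/l)A$ for a general member $A$ of $H^0(X,\mathcal O_X(lH)\otimes I_Z)$ with $l\gg 0$, which is exactly your section vanishing along $Z$ but not at $x$, and your explicit discrepancy computation ($a(E,X,\Delta+D+\varepsilon B)=-1-\varepsilon\, \mult_E f^*B<-1$ for $E$ with center $Z$) is just the justification the paper leaves implicit. Note also that you prove the statement for $(X,\Delta+D+\varepsilon B)$, which matches the paper's own proof and its application; the omission of $D$ in the proposition's displayed conclusion is evidently a typo.
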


\begin{proof}
We take a general member $A$ of 
$H^0(X, \mathcal O_X(lH)\otimes I_{Z})$, 
where $l$ is sufficiently large and divisible. 
Note that $H$ is an ample $\mathbb Q$-divisor. 
We put $B=(1/l)A$. 
Then $x\not\in B$ and 
$(X, \Delta +D+\varepsilon B)$ 
is not 
log canonical 
at $x'$ for every $\varepsilon>0$. 
\end{proof} 

Let us start the proof of Proposition \ref{122}. 

\begin{proof}[{Proof of {\em{Proposition \ref{122}}}}] 
We use the notation in Remark \ref{re2}. 
Let $W_1$ (resp.~$W_2$) be the minimal lc center of $(X, \Delta)$ such that 
$x_1 \in W_1$ (resp.~$x_2\in W_2$) when 
$\mu_1=\mu (x_1, X, \Delta)<\dim X$ (resp.~$\mu_2=
\mu(x_2, X, \Delta)<\dim X$). 
Otherwise, we put $W_1=X$ (resp.~$W_2=X$). 
Possibly after switching $x_1$ and $x_2$, 
we can assume that $\mu_1\leq \mu_2$. Without loss of 
generality, we can assume that 
$c(k)\in \mathbb Q$ for every $k$. 
\begin{case}\label{ca1}
If $W_1\not \subset W_2$, then we can see that 
$x_1\not \in W_2$ (see Theorem \ref{a1} (2)). By Proposition \ref{145}, 
we can find an effective $\mathbb Q$-divisor $B\equiv N$ such that $x_1\not 
\in B$ and $(X, \Delta +\varepsilon B)$ is not log canonical 
at $x_2$ for every $\varepsilon >0$. In this case, we cut down 
the minimal lc center passing through $x_1$ as in 
the proof of Proposition \ref{12} by using 
Proposition \ref{14}. 
Then we obtain a $\mathbb Q$-divisor $D$ on $X$ satisfying 
the conditions (1), (2), and (3) in Proposition \ref{122}. 
\end{case}
\begin{case}\label{ca2}
If $W_1\subsetneq W_2$, then $x_2 \not \in W_1$. Thus, the above proof 
in Case \ref{ca1} 
works after switching $x_1$ and $x_2$. 
\end{case}
\begin{case} 
We assume $W_1=W_2$. 
If $W_1=W_2=X$, then we apply Proposition \ref{27}. 
Otherwise, we use Proposition \ref{144} and 
cut down $W_1=W_2$. 
Then, we obtain 
$(X, \Delta +G)$ such that 
\begin{itemize}
\item[(1)] $G$ is an effective $\mathbb Q$-divisor on $X$ and 
a rational number $d$ such that 
$G\equiv d\frac {\sqrt[\leftroot{0}\uproot{0} k]{2}k}{c(k)}N$ with $0<d<1$, 
where $k=\dim W_1$, 
\item[(2)] $(X, \Delta +G)$ is not kawamata log terminal at the 
points $x_1$ and $x_2$, and is log canonical at one of them, say at 
$x_1$, and 
\item[(3)] there is the minimal lc center $W'_1$ of $(X, \Delta +G)$ such that 
$x\in W'_1$ and $\dim W'_1<\dim W_1$. 
\end{itemize}
If $(X, \Delta +G)$ is log canonical 
at the both points $x_1$ and $x_2$, and $x_1$ and $x_2$ stay on the same new minimal lc center 
of $(X, \Delta +G)$, 
then we apply Proposition \ref{144} again. 
By repeating this process, we obtain the situation 
where there is a suitable effective 
$\mathbb Q$-divisor $G'$ on $X$ such that 
$(X, \Delta+G')$ is not log canonical at one of $x_1$ and 
$x_2$, or $x_1$ and $x_2$ are on different minimal lc centers 
of the pair $(X, \Delta +G')$. 
Then, we 
can apply the same arguments as in Case \ref{ca1} and Case \ref{ca2}. 
\end{case}
Thus, we finish the proof. 
\end{proof}

\section{Appendix}\label{sec-app} 

In this appendix, we collect some basic properties 
of lc centers and 
the new cohomologial technique (cf.~\cite{ambro}). 
Here, we do not explain 
the definition of quasi-log varieties because 
it is very difficult to grasp. 
We think that Theorem \ref{a2} is sufficient 
for our purpose in this paper. We recommend the reader interested 
in the theory of quasi-log varieties to see \cite{f-q}. 

Throughout this appendix, $X$ is a normal 
variety and $B$ is an effective $\mathbb Q$-divisor 
on $X$ such that 
$K_X+B$ is $\mathbb Q$-Cartier. 

\begin{thm}[{cf.~\cite[Proposition 4.8]{ambro}}]\label{a1} 
Assume that $(X, B)$ is log canonical. Then 
we have the following 
properties. 
\begin{enumerate}
\item $(X, B)$ has at most finitely 
many lc centers. 
\item An intersection of two lc centers 
is a union of lc centers. 
\item Any union of lc centers of $(X, B)$ is semi-normal. 
\item Let $x\in X$ be a closed point such that 
$(X, B)$ is log canonical but not kawamata log terminal at $x$. 
Then there is a unique minimal lc center $W_x$ 
passing through $x$, and $W_x$ is normal at $x$.  
\end{enumerate}
\end{thm}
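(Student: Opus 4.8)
The plan is to reduce everything to a fixed log resolution and then transfer the combinatorial structure of simple normal crossing strata down to $X$, using the vanishing theorem attached to the quasi-log structure of \cite{ambro} to control the behaviour of the relevant structure sheaves. First I would fix a log resolution $f\colon Y\to X$ of $(X,B)$ and write $K_Y=f^{*}(K_X+B)+\sum a_i E_i$, where $\sum E_i$ is simple normal crossing. Since $(X,B)$ is log canonical, $a_i\geq -1$ for all $i$; let $E=\sum_{a_i=-1}E_i$ be the reduced coefficient-one part, so that $(Y,E)$ is simple normal crossing. The key dictionary, which I would establish first, is that the lc centers of $(X,B)$ are exactly the images $f(\Gamma)$ of the strata $\Gamma$ of $E$, i.e.\ the irreducible components of the intersections $E_{i_1}\cap\cdots\cap E_{i_r}$ with all $a_{i_j}=-1$. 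This rests on the fact that for a simple normal crossing pair the centers of the discrepancy $-1$ divisors are precisely these strata, together with the observation that any divisor of discrepancy $-1$ over $X$ has center dominated by such a stratum; thus a single resolution computes all lc centers. Note that, because $(X,B)$ is log canonical on all of $X$, every such image genuinely qualifies as an lc center.

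Granting the dictionary, statement (1) is immediate: $E$ has finitely many components, hence finitely many strata, hence finitely many images $f(\Gamma)$. For (2), I would show that $W_1\cap W_2$ is a union of lc centers by working upstairs. Writing $W_1=f(\Gamma_1)$ and $W_2=f(\Gamma_2)$, the subtle point is that the intersection downstairs may be strictly larger than $f(\Gamma_1\cap\Gamma_2)$. To repair this I would pass to a model on which $f^{-1}(W_1\cap W_2)$ is again a union of strata and invoke the Koll\'ar--Shokurov connectedness principle (cf.~\cite{kollar2}) applied to the fibers of $f|_E$: connectedness forces every component of $W_1\cap W_2$ to be the image of a stratum, hence an lc center.

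For (3) and (4) the decisive input is the vanishing theorem of \cite{ambro}. Given any union $V$ of lc centers, the vanishing $H^{1}(X,\mathcal{I}_{V'}\otimes L)=0$ for an ample line bundle $L$ and the relevant ideal sheaf $\mathcal{I}_{V'}$ yields surjectivity of restriction maps which, applied to $\mathcal{O}_V$ and to the pushforward of $\mathcal{O}$ from the corresponding union of strata on $Y$, show that $V$ is semi-normal: the semi-normality of the simple normal crossing union of strata descends. For (4), uniqueness of the minimal lc center $W_x$ through $x$ follows from (2): if $W_1,W_2$ were two distinct minimal lc centers through $x$, then $W_1\cap W_2$ would be a union of lc centers containing $x$ of strictly smaller dimension, contradicting minimality. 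Normality of $W_x$ at $x$ then follows by the same surjectivity argument applied to $W_x$, together with connectedness of $f^{-1}(x)\cap E$: the normalization $\nu\colon W_x^{\nu}\to W_x$ is shown to be an isomorphism near $x$ because $\nu_{*}\mathcal{O}_{W_x^{\nu}}/\mathcal{O}_{W_x}$ has no section supported at $x$.

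The step I expect to be the main obstacle is (4), the normality of the minimal lc center at $x$: it is the deepest assertion and requires combining connectedness of the fiber $f^{-1}(x)\cap E$ with the vanishing theorem in a neighbourhood of $x$, rather than the purely combinatorial reductions that suffice for (1)--(3). The intersection property (2) is the next most delicate, since the failure of the naive identity $f(\Gamma_1)\cap f(\Gamma_2)=f(\Gamma_1\cap\Gamma_2)$ is exactly what the connectedness principle is needed to fix.
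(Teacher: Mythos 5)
First, a point of comparison: the paper contains no proof of Theorem \ref{a1} at all --- it explicitly defers to \cite{ambro} and to \cite[Section 3.2]{book} --- so your attempt can only be measured against those cited arguments (Ambro's quasi-log formalism, or Kawamata's connectedness-based argument for intersections of lc centers). Your dictionary between lc centers and images of strata of the coefficient-one part $E$ on a single log resolution is correct and standard, and it does give (1), as well as the deduction of uniqueness in (4) from (2). But there are two genuine gaps. The first is in (2): the sentence ``pass to a model on which $f^{-1}(W_1\cap W_2)$ is again a union of strata'' begs the question --- a further blow-up can make $f^{-1}(W_1\cap W_2)$ an snc divisor, but you have no control of discrepancies, so there is no reason it becomes a union of strata of the \emph{coefficient-one} part; that it does is essentially equivalent to what you are trying to prove. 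Nor does the Koll\'ar--Shokurov connectedness principle directly ``force every component of $W_1\cap W_2$ to be the image of a stratum'': connectedness of $f^{-1}(x)\cap E$ yields a chain of coefficient-one divisors $E_{j_0},\dots,E_{j_m}$ linking a divisor over $W_1$ to one over $W_2$, with adjacent members meeting over $x$; the images of the resulting strata are lc centers \emph{through} $x$, but nothing guarantees they are \emph{contained in} $W_1\cap W_2$. Closing this gap is the actual content of the known proofs (localization at the generic point of a component of $W_1\cap W_2$ plus an inductive adjunction argument along the chain, or Ambro's adjunction for quasi-log varieties), and it cannot be replaced by a one-line appeal to connectedness.

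The second gap concerns (3) and the normality claim in (4). You invoke the global vanishing $H^1(X,\mathcal I_{V'}\otimes L)=0$ for ample $L$ (the paper's Theorem \ref{a2}), but (a) Theorem \ref{a1} carries no projectivity hypothesis while Theorem \ref{a2} does, and (b) even granting projectivity, surjectivity of restriction maps $H^0(X,L)\to H^0(V,L|_V)$ does not yield semi-normality. What actually drives (3) is the sheaf-level identity $\mathcal O_V\simeq f_*\mathcal O_{W'}$, where $W'$ is the union of strata lying over $V$: since $W'$ is snc (hence semi-normal) and semi-normality descends along proper surjections $g$ with $g_*\mathcal O_{W'}=\mathcal O_V$, this identity is the whole point --- and it is established via the \emph{relative} torsion-free and vanishing theorems of \cite{ambro} (cf.~\cite{fuji-pro}), not via global vanishing with an ample twist. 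The same identity, applied near $x$ to the minimal stratum mapping onto $W_x$ (irreducible and smooth, with connected fibers over a neighborhood of $x$ by connectedness and minimality), is what makes your normalization argument in (4) work: the map factors through the normalization, so $\nu_*\mathcal O_{\bar W_x}\subseteq f_*\mathcal O_{W'}=\mathcal O_{W_x}$ near $x$. As written, your proposal assumes this hard sheaf-theoretic content implicitly; it is precisely the part of the quasi-log package that Theorem \ref{a2} encapsulates but that your ample-twist surjectivity argument does not reconstruct.
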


The next theorem is one of the most important 
results in the theory of quasi-log varieties 
(see \cite[Theorem 4.4]{ambro}). 

\begin{thm}\label{a2} 
The pair $(X, B)$ has a natural quasi-log structure. 
Let $X\setminus X_{-\infty}$ be the maximal 
Zariski open set where 
$(X, B)$ is log canonical. 
Let $X'$ be the union of $X_{-\infty}$ and 
some lc centers of $(X, B)$. 
Then $X'$ has a natural 
quasi-log structure, which is induced by the quasi-log structure of 
$(X, B)$. 
We consider the following short exact sequence 
$$
0\to \mathcal I_{X'}\to \mathcal O_X\to \mathcal O_{X'}\to 0,  
$$ 
where $\mathcal I_{X'}$ is the defining ideal sheaf of 
$X'$ on $X$. 
Note that $X'$ is reduced on $X\setminus X_{-\infty}$. 
Assume that $X$ is projective. 
Let $L$ be a line bundle on $X$ such that 
$L-(K_X+B)$ is ample. 
Then $H^i(X, \mathcal I_{X'}\otimes L)=0$ for all  
$i>0$. 
In particular, the restriction map 
$$H^0(X, L)\to H^0(X', L|_{X'})$$ is 
surjective. 
\end{thm}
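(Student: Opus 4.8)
The plan is to realize the quasi-log structure concretely through a log resolution and then reduce the vanishing to the fundamental torsion-freeness and vanishing theorem for embedded simple normal crossing pairs, which is the technical core of \cite{ambro}. First I would fix a projective birational morphism $f\colon Y\to X$ from a smooth variety such that $\mathrm{Exc}(f)\cup \Supp(f^{-1}_*B)$ is a simple normal crossing divisor, and write
$$
K_Y+B_Y=f^*(K_X+B).
$$
Decomposing $B_Y=B_Y^{<1}+B_Y^{=1}+B_Y^{>1}$ according to whether the coefficient is $<1$, $=1$, or $>1$, I would set
$$
A=\lceil -(B_Y^{<1})\rceil,\qquad N=\lfloor B_Y^{>1}\rfloor,\qquad T=B_Y^{=1},
$$
so that $A$ is an effective $f$-exceptional divisor with $f_*\mathcal O_Y(A)=\mathcal O_X$, while $N$ and $T$ are effective and $T$ is reduced. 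The datum $(f\colon Y\to X,\,B_Y)$ together with $\omega=K_X+B$ is what endows $(X,B)$ with its natural quasi-log structure: the strata are the $f$-images of the strata of $(Y,B_Y)$, one checks $\mathcal I_{X_{-\infty}}=f_*\mathcal O_Y(A-N)$, and $X_{-\infty}$ is exactly the non-lc locus. Splitting $T=T'+T''$, where $T'$ collects the coefficient-one components mapping onto the chosen lc centers, the union $X'$ of $X_{-\infty}$ and those centers satisfies $\mathcal I_{X'}=f_*\mathcal O_Y(A-N-T')$; restricting the construction to the snc variety cut out by $T'$ (together with $B_Y^{>1}$ over $X_{-\infty}$) supplies, via adjunction, the induced quasi-log structure on $X'$.

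For the vanishing I would use the projection formula to write, for a line bundle $L$ on $X$,
$$
\mathcal I_{X'}\otimes L=f_*\mathcal O_Y(G),\qquad G:=A-N-T'+f^*L,
$$
and then carry out the routine discrepancy computation $K_Y=f^*(K_X+B)-B_Y$ to obtain
$$
G=K_Y+\Theta+f^*\bigl(L-(K_X+B)\bigr),\qquad \Theta:=(A+B_Y^{<1})+(B_Y^{>1}-N)+T''.
$$
Here the first two summands of $\Theta$ are effective with all coefficients in $[0,1)$ and $T''$ is reduced, so $\lfloor\Theta\rfloor=T''$ and $(Y,\Theta)$ is a simple normal crossing pair with $\Theta$ a boundary. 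The key point is that $G-(K_Y+\Theta)=f^*\bigl(L-(K_X+B)\bigr)$ is the $f$-pullback of the ample $\mathbb Q$-divisor $L-(K_X+B)$.

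Now I would invoke the fundamental theorem of \cite{ambro} for the embedded simple normal crossing pair $(Y,\Theta)$ and the proper morphism $f$: since $G-(K_Y+\Theta)$ is the $f$-pullback of an ample divisor on the projective variety $X$, every associated prime of each $R^qf_*\mathcal O_Y(G)$ is the generic point of the image of a stratum, and moreover $H^p(X,R^qf_*\mathcal O_Y(G))=0$ for all $p>0$. Taking $q=0$ gives exactly
$$
H^i(X,\mathcal I_{X'}\otimes L)=H^i(X,f_*\mathcal O_Y(G))=0\qquad(i>0).
$$
The surjectivity of $H^0(X,L)\to H^0(X',L|_{X'})$ then follows at once from the long exact sequence attached to $0\to \mathcal I_{X'}\to \mathcal O_X\to \mathcal O_{X'}\to 0$ tensored with $L$, using the vanishing of $H^1(X,\mathcal I_{X'}\otimes L)$.

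The main obstacle is not the bookkeeping above but the cited fundamental theorem itself: the generalization of Koll\'ar's torsion-freeness and of Kawamata--Viehweg--Kodaira vanishing from a single smooth variety to a \emph{reducible} simple normal crossing pair. This is precisely the new cohomological technique of \cite{ambro}; its proof rests on a Hodge-theoretic analysis of the mixed Hodge structures on the cohomology of the open strata and the resulting strictness, which controls both the higher direct images along $f$ and their cohomology on the base. A secondary point requiring care is that $B_Y$ genuinely has components of coefficient $>1$ over $X_{-\infty}$; these are absorbed into the term $-N=-\lfloor B_Y^{>1}\rfloor$ and their fractional remainders into $\Theta$, and it is the torsion-freeness statement that guarantees that the contribution of $\mathcal O_Y(G)$ along $X_{-\infty}$ does not obstruct the vanishing.
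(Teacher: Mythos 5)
Your proposal is correct and follows essentially the same route as the paper's own proof, which is given only by citation to \cite{ambro} and \cite[Section 3.2]{book}: there, too, the quasi-log structure is realized on a log resolution $f\colon Y\to X$, the defining ideal is computed as $\mathcal I_{X'}=f_*\mathcal O_Y(\lceil -(B_Y^{<1})\rceil-\lfloor B_Y^{>1}\rfloor-T')$, and the statement is reduced, via exactly your discrepancy computation, to the torsion-freeness and vanishing theorem for embedded simple normal crossing pairs. The one small correction: $T'$ must consist of \emph{all} coefficient-one components of $B_Y$ whose $f$-image is contained in $X'$ (including those lying over smaller lc centers inside $X'$ or over $X_{-\infty}$), not only those mapping onto the chosen lc centers; with that adjustment your argument is the cited proof.
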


For the proofs of Theorem \ref{a1} and 
Theorem \ref{a2}, see \cite{ambro} and \cite[Section 3.2]{book}. 
We close this appendix with an important remark. 

\begin{rem}
The vanishing and torsion-free theorems required for the proofs of 
Theorem \ref{a1} and Theorem \ref{a2} can be 
proved easily by investigating mixed Hodge structures on compact cohomology groups of smooth varieties. Therefore, 
\cite{fuji-pro} is sufficient for our purposes. We do not 
need \cite[Chapter 2]{book}, which is very technical and 
seems to be inaccessible to non-experts. 
\end{rem}

\ifx\undefined\bysame
\newcommand{\bysame|{leavemode\hbox to3em{\hrulefill}\,}
\fi

\end{document}